\newcommand{\lkul}{\Lambda _{\textrm{Kul}}}
\newcommand{\lmyr}{\Lambda _{\textrm{Myr}}}
\newcommand{\R}{\mathbb{R}}
\newcommand{\Hbb}{\mathbb{H}}
\newcommand{\sol}{\mathbb{S}\mbox{ol}}
\newcommand{\polyDisc}{\mathbb{H}\times\mathbb{H}}
\newcommand{\C}{\mathbb{C}}
\newcommand{\Proy}{\mathbb P_{\mathbb C}}
\newcommand{\heisenberg}{\mathrm{Heis}}
\newcommand{\solRep}{\begin{pmatrix}
		\lambda^t &     0        & a\,x + b\,y\\
		   0      & \lambda^{-t} & c\,x + d\, y\\
		   0      &     0        &   1
	\end{pmatrix}}
\renewcommand\l\left
\renewcommand\r\right
\newtheorem{theorem}{Theorem}[section]
\newtheorem{proposition}{Proposition}[section]
\newtheorem{corollary}[theorem]{Corollary}
\theoremstyle{definition}
\newtheorem{definition}[theorem]{Definition}
\newtheorem{example}[theorem]{Example}
\theoremstyle{remark}
\newtheorem{remark}[theorem]{Remark}
\numberwithin{equation}{section}
\begin{document}
	\title{A note  about the topological type of families of complex kleinians groups in $\Proy^2$}

	\author{Waldemar Barrera Vargas}
	\address{Facultad de matemáticas, Mérida, México}
	\curraddr{Facultad de matemáticas, Mérida, México}
	\email{bvargas@correo.uady.mx}
	
	\author{René García Lara}
	\address{Facultad de matemáticas, Mérida, México}
	\curraddr{Facultad de matemáticas, Mérida, México}
	\email{rene.garcia@correo.uady.mx}
	
	\author{Juan Navarrete Carrillo}
	\address{Facultad de matemáticas, Mérida, México}
	\curraddr{Facultad de matemáticas, Mérida, México}
	\email{jp.navarrete@correo.uady.mx}

	\subjclass[2000]{Primary }
	%    The 2010 edition of the Mathematics Subject Classification is
	%    now available.  If you are citing a classification from the
	%    new scheme, use the following input coding instead.
	%\subjclass[2010]{Primary }
	
	\date{}
	
	\begin{abstract}
	We give a complete description of  the topological type of the  quotient space $\Omega/G$  of Complex Kleinian Groups with the maximum numbers
	of complex projectives lines in general position contained in its Kulkarni's limit set is four.
	\end{abstract}
	
	\maketitle
	
\section{Introduction}
The complex Kleinian groups were introduced by Jos\'e Seade and Alberto Verjosky \cite{SV} in  order to study the discrete subgroups of the group
of automorphisms of complex projective spaces. These groups are natural generalizations of Kleinian groups  in the context of hyperbolic spaces. 
A complex Kleinian group $G$ is a subgroup of $\textrm{PSL}(n+1,\mathbb{C})$  for which exists a $G$-invariant open non empty set 
of $\Proy^n$ where the action of $G$ is properly discontinuous. We notice the group $G$ is a  discrete subgroup of 
$\textrm{PSL}(n+1,\mathbb{C})$  but the converse is not necessarily true, for example the group $\textrm{PSL}(3,\mathbb{Z})$ is a discrete
subgroup of $\textrm{PSL}(3,\mathbb{C})$ but this not  a complex Kleinian group \cite{BCN5} and that give a difference with the kleinian groups of
hyperbolic spaces. Another important difference with the classical Kleinian groups is the concept of  limit set,  in the  case
of complex Kleinian groups with  do not have an standard definition, we have  three notions: Kulkarni limit set, Myrberg limit set, complement
 of a maximal region of discontinuity which are discussed in detail in \cite{}, 
but by some additional hypotheses of all these concepts of limit set are equivalents \cite{BCN}. In \cite{CS} Angel Cano and Jos\'e Seade show that every infinite discrete
subgroup of $\textrm{PSL}(3,\mathbb{C})$ has a complex projective  line contained in   the  limit set, in consequence, the limit set of  infinite
subgroups of $\textrm{PSL}(3,\mathbb{C})$ is an uncountable subset of $\Proy^2$, this result give a  big difference with the  classical 
Kleinian groups because the most simple groups called elementary the cardinal of his limit set is 0, 1 or 2 but in the case of complex Kleinian groups
the cardinal of the limit set is always zero or infinite.

An interesting problem, in analogy with the classical theory of Kleinian groups, is to define we understand as complex elementary Kleinian group.  
An alternative is to define an elementary complex Kleinian group, as that group whose limit set contains a finite number of complex  projectives lines 
\cite{CNS}, this definition  is good in some cases, but we can construct examples of groups whose limit set contained infinitely many complex projectives 
lines but only a  finite number of them in general position, then we can define an elementary complex Kleinian group of type II of the last way.

In \cite{BCN4} the authors give  a caractherization of complex Kleinian groups  with the maximum numbers of complex projectives lines 
in general position contained in its Kulkarni's limit set is four. In this article we  described the topology of these groups a  we obtain the following
theorem

\begin{theorem}\label{main 1}
Let $G$ be complex Kleinian group  with the maximum numbers of complex projectives lines in general position containes in  it's Kulkarni limit set is four,
then we have  the  follows:
\begin{enumerate}
 \item The  group $G$ is isomorphic to a lattice of the group  $\sol$
 \item Let $\Omega_0$ be a $G$-invariant connected component of the Kulkarni  discontinuity  region of $G$, then
 $\Omega_0/G$ is diffeomorphic to $\left( \sol/G \right)\times \mathbb{R}$. 
\end{enumerate}

\end{theorem}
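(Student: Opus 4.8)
The plan is to use the classification of \cite{BCN4} to put $G$ into a normal form inside the copy of $\sol$ lying in $\mathrm{PSL}(3,\C)$, to upgrade ``discrete'' to ``lattice'', and then to analyze directly the orbit foliation of the ambient group $\sol$ on $\Omega_0$. First I would invoke \cite{BCN4}: after conjugation in $\mathrm{PSL}(3,\C)$ we may assume that $G$ is a subgroup of the copy of $\sol$ in $\mathrm{PSL}(3,\C)$ consisting of the matrices
\[
\solRep
\]
with $ad-bc\neq 0$, $|\lambda|\neq 1$ and $(t,x,y)$ ranging over $\R^3$, and that $\lkul(G)$ is the explicit configuration of complex projective lines described there. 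That configuration is $\sol$-invariant, so $\sol$ preserves $\Proy^2\setminus\lkul(G)$ and permutes its connected components; I would check that it fixes the $G$-invariant component, so that $\Omega_0$ itself is $\sol$-invariant. Being a complex Kleinian group, $G$ is discrete in $\mathrm{PSL}(3,\C)$, hence in $\sol$, and --- $\sol$ having no nontrivial compact subgroup --- torsion free.

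The first substantial step is to prove that $\sol/G$ is compact, which gives~(1). Here I would convert the hypothesis ``at most four lines of $\lkul(G)$ in general position'' into structure of $G$ inside $\sol$: writing $p\colon\sol\to\R$ for the projection onto the $\R$-factor, the normal subgroup $\R^2$ of $\sol$ carries the two ``pencils'' that make up the limit configuration, and one argues that if $p(G)$ were not a nontrivial discrete (hence infinite cyclic) subgroup, or if $G\cap\R^2$ had rank $<2$, then the number of limit lines in general position would drop below four. Hence $G=(G\cap\R^2)\rtimes p(G)$ with $G\cap\R^2\cong\mathbb{Z}^2$ and $p(G)\cong\mathbb{Z}$ acting by a hyperbolic automorphism, which is precisely a cocompact, torsion-free lattice of $\sol$.

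For~(2), note that $\dim_\R\sol=3$ while $\Omega_0$, being open in $\Proy^2$, is a $4$-manifold, so the $\sol$-orbits in $\Omega_0$ have codimension one. I would exhibit, by an elementary computation in the affine chart $\{z_2\neq 0\}$, an $\sol$-invariant smooth submersion $\rho\colon\Omega_0\to\R$ whose level sets are exactly the $\sol$-orbits (transitivity on a level set is obtained by solving a triangular system for $(t,x,y)$, while $d\rho\neq 0$ and the identification of each level set with a single orbit use $ad-bc\neq 0$ and $|\lambda|\neq 1$). The same computation shows that $\sol$ acts freely on $\Omega_0$ --- a nontrivial isotropy element forces $(t,x,y)=0$ --- and properness follows from cocompactness of $G$: if $g_n\in\sol$ with $x_n\to x$ and $g_nx_n\to y$ in $\Omega_0$, write $g_n=\gamma_nh_n$ with $\gamma_n\in G$ and $h_n$ in a fixed compact fundamental domain; proper discontinuity of $G$ on $\Omega_0$ forces $\{\gamma_n\}$ to be finite, whence $\{g_n\}$ subconverges. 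Consequently $\Omega_0\to\Omega_0/\sol$ is a principal $\sol$-bundle and $\rho$ descends to a diffeomorphism of $\Omega_0/\sol$ onto an open interval, so $\Omega_0/\sol\cong\R$; a principal bundle over a contractible base being trivial, there is an $\sol$-equivariant diffeomorphism $\Omega_0\cong\sol\times\R$ carrying the action to translation on the first factor. Restricting this action to $G\subset\sol$ and passing to quotients yields $\Omega_0/G\cong(\sol/G)\times\R$, which is~(2).

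I expect the cocompactness argument of the second step to be the main obstacle: one must genuinely use the extremal hypothesis on $\lkul(G)$, ruling out each ``smaller'' discrete subgroup of $\sol$ by showing that it would force fewer than four limit lines in general position. A secondary, more technical difficulty is the verification in the last step that the $\sol$-action on $\Omega_0$ is indeed free and proper and that its orbit space is $\R$ rather than a half-line or a circle; this requires a careful description of how the $3$-dimensional orbits degenerate as one approaches $\lkul(G)$.
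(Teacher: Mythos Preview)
Your proposal is correct and reaches the same conclusion, but by a different and in some ways more economical route than the paper.

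For part~(1), the paper does essentially no work: the classification of \cite{BCN4}, recalled in the preliminaries, already hands you (up to conjugation and index $\leq 8$) the explicit group $G_A$ whose elements are indexed by $(k,n,m)\in\mathbb{Z}^3$, which is visibly $\mathbb{Z}^2\rtimes_A\mathbb{Z}$ for a hyperbolic $A\in\mathrm{SL}(2,\mathbb{Z})$ and hence a cocompact lattice in $\sol$. Your proposed case analysis (if $p(G)$ or $G\cap\R^2$ is too small then fewer than four limit lines in general position) amounts to re-deriving part of \cite{BCN4}; it is correct in spirit but unnecessary here.

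For part~(2), the paper does not invoke principal bundles. Instead it builds the diffeomorphism $\sol\times\R\to\Hbb\times\Hbb$ explicitly: after checking that the orbit maps $f_z(\gamma)=\gamma\cdot z$ are free embeddings, it writes down the Euclidean normal field $X$ to the orbit foliation, integrates it to the flow $\psi_s(z_1,z_2)=(x_1,e^sy_1,x_2,e^sy_2)$, verifies the equivariance $\psi_s\circ f_z=f_{\psi_s(z)}$, and sets $\Psi(\gamma,s)=\psi_s\circ f_{z_0}(\gamma)$ for a fixed base point $z_0$; bijectivity and nondegeneracy of $d\Psi$ are checked by hand. Your abstract route (free and proper action $\Rightarrow$ principal $\sol$-bundle over a contractible $1$-manifold $\Rightarrow$ equivariantly trivial) is cleaner and bypasses these computations, though the paper's explicit construction buys extra geometric information --- the induced leaf metrics, and the fact that each leaf is isometric to $\sol$ up to a homothety in the $xy$-directions --- which your argument does not detect. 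Incidentally, your $\sol$-invariant submersion $\rho$ can simply be taken as $\log(\mathrm{Im}\,z_1\cdot\mathrm{Im}\,z_2)$, which surjects onto $\R$, so the concern you flag about the orbit space being a half-line or a circle does not arise.
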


\begin{corollary}
 There is a countable number of complex Kleinian groups non isomorphic with the maximum  numbers of complex projectives lines in general position containes 
 in  it's Kulkarni limit set is four.
\end{corollary}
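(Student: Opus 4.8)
The plan is to deduce the corollary directly from part~(1) of Theorem~\ref{main 1}, which identifies every group $G$ of the type considered with a lattice of $\sol$. Thus it suffices to show that $\sol$ has only countably many lattices up to abstract group isomorphism.

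For this I would use the well known description of lattices in the three–dimensional solvable Lie group $\sol=\R^2\rtimes\R$. Since $\sol$ is connected, simply connected and solvable, every lattice $\Gamma\le\sol$ is cocompact and polycyclic, and in fact torsion free; intersecting with the normal copy of $\R^2$ one sees that $\Gamma$ fits into a short exact sequence $1\to\mathbb{Z}^2\to\Gamma\to\mathbb{Z}\to 1$ in which a generator of the quotient acts on $\mathbb{Z}^2$ by a hyperbolic matrix $A\in\mathrm{SL}(2,\mathbb{Z})$ (equivalently $|\operatorname{tr}A|>2$), and this sequence splits, so $\Gamma\cong\mathbb{Z}^2\rtimes_A\mathbb{Z}$; conversely each such $A$ gives a lattice of $\sol$. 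Because $A$ has no eigenvalue that is a root of unity, the subgroup $\mathbb{Z}^2$ is characteristic in $\Gamma$, and one obtains that $\mathbb{Z}^2\rtimes_A\mathbb{Z}\cong\mathbb{Z}^2\rtimes_B\mathbb{Z}$ exactly when $A$ is conjugate in $\mathrm{GL}(2,\mathbb{Z})$ to $B$ or to $B^{-1}$. Since $\mathrm{GL}(2,\mathbb{Z})$ is countable there are only countably many such conjugacy classes, hence only countably many isomorphism types of lattices of $\sol$, and therefore, by Theorem~\ref{main 1}(1), only countably many isomorphism types of the groups in the statement. (If one prefers to bypass the explicit structure theory, the same follows more softly: a lattice in a connected simply connected solvable Lie group is polycyclic, hence finitely presented, and there are only countably many finite group presentations, so only countably many finitely presented groups up to isomorphism.)

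Finally I would point out that the family is genuinely infinite, since there are infinitely many conjugacy classes of hyperbolic elements of $\mathrm{SL}(2,\mathbb{Z})$ — for instance one of trace $n$ for each integer $n\ge 3$ — giving infinitely many pairwise non‑isomorphic lattices $\mathbb{Z}^2\rtimes_A\mathbb{Z}$, and by the characterization obtained in~\cite{BCN4} each of the corresponding lattices is realized by an honest complex Kleinian group in $\Proy^2$. Essentially all of the content is already contained in Theorem~\ref{main 1}, so there is no serious obstacle; the only step that requires a little care is this last one, namely exhibiting, for each abstract lattice of $\sol$, a genuine discrete subgroup of $\mathrm{PSL}(3,\C)$ whose Kulkarni limit set contains four complex projective lines in general position and no more — which is precisely what~\cite{BCN4} supplies.
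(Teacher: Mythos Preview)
Your argument is correct and follows essentially the same route the paper indicates: the paper states that the corollary is a consequence of Theorem~\ref{main 1} together with Proposition~30 of \cite{H} (reproduced in the preliminaries), which is exactly your reduction to lattices of $\sol$ of the form $\mathbb{Z}^2\rtimes_A\mathbb{Z}$ and the $\mathrm{GL}(2,\mathbb{Z})$--conjugacy criterion for isomorphism. Your write-up is in fact more complete than the paper's, since you also supply the lower bound (infinitely many non-isomorphic examples, realized via \cite{BCN4}) and a soft alternative via finite presentability of polycyclic groups.
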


\begin{corollary}
 Under the hypotheses of Theorem \ref{main}, $\Omega_0/\Gamma$ is a  fiber bundle with base $\mathbb{S}^1\times \mathbb{R}$ and 
 fiber $\mathbb{T}^ 2\times \mathbb{R}$.
\end{corollary}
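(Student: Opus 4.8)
The strategy is to feed the two conclusions of Theorem~\ref{main 1} into the classification of compact quotients of $\sol$. By part~(2) there is a diffeomorphism $\Omega_0/G\cong(\sol/G)\times\mathbb{R}$, so it suffices to exhibit $(\sol/G)\times\mathbb{R}$ as a fiber bundle of the stated form; and by part~(1) the group $G$ is isomorphic to a lattice $\Gamma$ of $\sol$. First I would recall the structure of such a lattice. Writing $\sol=\mathbb{R}\ltimes\mathbb{R}^2$, with $t\in\mathbb{R}$ acting on $\mathbb{R}^2$ by the diagonal matrix $\mathrm{diag}(e^{t},e^{-t})$, every lattice $\Gamma\subset\sol$ is cocompact (a theorem of Mostow), meets the normal plane $\mathbb{R}^2$ in a rank-two lattice $L\cong\mathbb{Z}^2$, and sits in a short exact sequence $1\to\mathbb{Z}^2\to\Gamma\to\mathbb{Z}\to1$ in which a generator of $\mathbb{Z}=\Gamma/L$ acts on $L$ by a hyperbolic automorphism $A\in\mathrm{GL}(2,\mathbb{Z})$, i.e.\ a matrix conjugate over $\mathbb{R}$ to $\mathrm{diag}(e^{t_0},e^{-t_0})$ for some $t_0>0$.

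From this I would identify $\sol/\Gamma$ with the mapping torus of the toral automorphism induced by $A$: concretely $\sol/\Gamma\cong(\mathbb{T}^2\times\mathbb{R})/\mathbb{Z}$, where $\mathbb{T}^2=\mathbb{R}^2/L$ and $\mathbb{Z}$ acts by $(v,t)\mapsto(Av,\,t+1)$ — the model obtained by quotienting the $\mathbb{R}^2$-fibers of $\sol\to\mathbb{R}$ by $L$ and then by the residual $\mathbb{Z}$. In particular the projection to the $t$-coordinate presents $\sol/\Gamma$ as a smooth locally trivial fiber bundle over $\mathbb{S}^1$ with fiber $\mathbb{T}^2$ and monodromy $A$; this is the well-known fact that the Sol-manifolds are exactly the torus bundles over the circle with Anosov monodromy.

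Crossing with the line of part~(2) then gives $(\sol/G)\times\mathbb{R}\cong(\mathbb{T}^2\times\mathbb{R}\times\mathbb{R})/\mathbb{Z}$, with $\mathbb{Z}$ acting by $(v,s,t)\mapsto(Av,\,s,\,t+1)$. Projecting to the pair of coordinates $(s,t)$ exhibits this space as a smooth locally trivial $\mathbb{T}^2$-bundle over $\mathbb{S}^1\times\mathbb{R}$, while projecting to $t$ alone exhibits it as a smooth locally trivial bundle over $\mathbb{S}^1$ with fiber $\mathbb{T}^2\times\mathbb{R}$ and monodromy $A\times\mathrm{id}_{\mathbb{R}}$. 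Precomposing either description with the diffeomorphism $\Omega_0/G\xrightarrow{\,\cong\,}(\sol/G)\times\mathbb{R}$ supplied by Theorem~\ref{main 1}(2) transports the fibration to $\Omega_0/G$, which is the content of the corollary.

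I do not expect a genuine obstacle: the argument is a formal consequence of Theorem~\ref{main 1} combined with the classical $\mathbb{Z}^2$-by-$\mathbb{Z}$ description of lattices in $\sol$. The only point that deserves care is the bookkeeping of identifications: one must invoke the \emph{explicit} isomorphism of part~(1) (not merely an abstract one) in order to know that $\sol/G$ is literally the mapping torus above, as a bundle over $\mathbb{S}^1$, rather than just homotopy equivalent to it; and one must check that the $\mathbb{R}$-direction produced in part~(2) is a genuine product factor, so that it may be absorbed freely into the base or into the fiber. Both of these are built into the statements already established.
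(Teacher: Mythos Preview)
Your argument is correct and follows the same route the paper sketches: in the introduction the authors simply say that this corollary is a consequence of Theorem~\ref{main 1} together with ``proposition~30'' in de~la~Harpe \cite{H}, which is precisely the $\mathbb{Z}^2\rtimes_A\mathbb{Z}$ structure of lattices in $\sol$ and the resulting description of $\sol/\Gamma$ as a torus bundle over $\mathbb{S}^1$. You have effectively unpacked that reference and carried out the product-with-$\mathbb{R}$ step explicitly, so there is no substantive difference in approach.

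One remark: as you implicitly noticed by writing down \emph{both} fibrations, the statement as printed has a dimension mismatch (a $2$-dimensional base and a $3$-dimensional fiber would give a $5$-manifold, whereas $\Omega_0/G$ is $4$-dimensional). The two consistent readings are base $\mathbb{S}^1\times\mathbb{R}$ with fiber $\mathbb{T}^2$, or base $\mathbb{S}^1$ with fiber $\mathbb{T}^2\times\mathbb{R}$; your proof covers both, which is the right way to handle it.
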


\begin{theorem}\label{main2}
 Let $G$ a lattice of  three dimensional real Heisenberg group $\mathcal{H}$, then there exist a $G$-invariant open set
 $\Omega\subset \mathbb{P}^2_{\mathbb{C}}$ such that $\Omega/G$ is diffeomorphic to $\left(\mathcal{H}/G\right) \times \mathbb{R}$.
\end{theorem}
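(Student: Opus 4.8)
The plan is to realize $\mathcal{H}$ concretely inside $\textrm{PSL}(3,\C)$ as the group of real unipotent upper–triangular matrices and then to build $\Omega$ by hand from an invariant foliation of an affine chart. Let $\ell_\infty=\{[u:v:0]\}$, which is the unique $\mathcal{H}$–invariant line; on the affine chart $\C^2=\Proy^2\smallsetminus\ell_\infty$ the element of $\mathcal{H}$ with upper entries $a,b,c$ acts by the affine map $(u,v)\mapsto(u+av+c,\,v+b)$. Splitting $u=u_1+iu_2$, $v=v_1+iv_2$ and using $a,b,c\in\R$, this action is
\begin{equation*}
(u_1,u_2,v_1,v_2)\longmapsto\left(u_1+av_1+c,\;u_2+av_2,\;v_1+b,\;v_2\right).
\end{equation*}
Hence $\operatorname{Im}v=v_2$ is a $G$–invariant function, and $\Omega:=\{(u,v)\in\C^2:\operatorname{Im}v>0\}$ is an open $G$–invariant subset of $\Proy^2$ (one could equally take $\operatorname{Im}v<0$, or enlarge $\Omega$ with suitable strata of $\ell_\infty$, but this choice already suffices).

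The first step is to check that $G$ acts properly discontinuously on $\Omega$. For a compact $K\subset\Omega$ the function $\operatorname{Im}v$ takes values in some $[\alpha,\beta]$ with $\alpha>0$; if $g\in G$ has entries $a,b,c$ and $gK\cap K\neq\varnothing$, then reading the displayed formula coordinate by coordinate bounds, successively, $|b|$ (from the $v_1$–coordinate), then $|a|$ (from the $u_2$–coordinate, crucially using $v_2\ge\alpha>0$), and finally $|c|$ (from the $u_1$–coordinate), by constants depending only on $K$. Since $G$ is a lattice in $\mathcal{H}$ it is discrete in $\textrm{PSL}(3,\C)$, so only finitely many $g\in G$ lie in that bounded region; the action is therefore properly discontinuous and $\Omega/G$ is a smooth manifold.

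The second and essential step is to produce a $G$–equivariant diffeomorphism $\Phi\colon\Omega\to\mathcal{H}\times\R$, where $G$ acts on $\mathcal{H}$ by left translations and trivially on the $\R$–factor. Coordinatize $\mathcal{H}\cong\R^3$ so that the element $M(p,q,r)$ with upper entries $p,q,r$ corresponds to $(p,q,r)$; then left translation by $M(a,b,c)$ sends $(p,q,r)$ to $(a+p,\,b+q,\,c+r+aq)$. On the slice $\{v_2=s\}$ with $s>0$ the induced action is $(u_1,u_2,v_1)\mapsto(u_1+av_1+c,\,u_2+as,\,v_1+b)$, and the rescaling $(u_1,u_2,v_1)\mapsto(u_2/s,\,v_1,\,u_1)$ carries it exactly onto the above left–translation action. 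Letting $s=\operatorname{Im}v$ vary assembles these into $\Phi(u_1,u_2,v_1,v_2)=\big((u_2/v_2,\,v_1,\,u_1),\,v_2\big)$, which is smooth with smooth inverse $((p,q,r),s)\mapsto(r,ps,q,s)$ because $v_2>0$ on $\Omega$. I expect the only real work to be this bookkeeping — verifying that the slicewise rescaling matches the Heisenberg group law identically, and that properness is uniform in $s$ (which is precisely why $\operatorname{Im}v$ must stay away from $0$ on compacts). Granting it, $\Phi$ descends to $\Omega/G\cong(\mathcal{H}\times\R)/G=(\mathcal{H}/G)\times\R$, with $\mathcal{H}/G$ the (compact) Heisenberg nilmanifold, which is exactly the claim.
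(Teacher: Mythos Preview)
Your proof is correct and follows essentially the same route as the paper: the open set $\Omega=\{(u,v):\operatorname{Im}v>0\}=\C\times\Hbb$ is exactly the paper's choice, and your diffeomorphism $\Phi(u_1,u_2,v_1,v_2)=\bigl((u_2/v_2,v_1,u_1),v_2\bigr)$ is precisely the inverse of the paper's $\Psi(\gamma,q)=\gamma\cdot(0,qi)$. The only minor difference is that you establish proper discontinuity by a direct bounding argument on $a,b,c$, whereas the paper first proves the diffeomorphism and then invokes a general lemma that a group acting on $X\times Y$ via $g\cdot(x,y)=(g\cdot x,y)$ is properly discontinuous iff it is so on $X$; both are straightforward.
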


This article is organized in the  following  way:
In  section 1 we give some basic preliminaries about complex Kleinian groups, and we give a description of the groups with maximum
numbers of complex lines contained its Kulkarni's limit  set equal four \cite{BCN4}.
The next  section we give the proof of Theorem \ref{main} and its corollaries, in fact the  strategy of  the proof is the following:

Firstly  we notice that $G$ is naturally identified with the lattice of $\sol$,  $\mathbb{Z}^2\rtimes_{A} \mathbb{Z}$, where 
$A\in \textrm{SL}(2,\mathbb{Z})$ is an hyperbolic automorphism of torus. In the second section we defined an immersion of the group 
$\sol$ to the bydisc $\mathbb{H}\times \mathbb{H}$ and we show this  immersion can be extended to a $G$-equivariant  diffeomorphism of
$\sol\times \mathbb{R}$ to $\mathbb{H}\times \mathbb{H}$ and we finish the proof.
The corollaries are  consequence of the Theorem \ref{main}  and proposition 30 \cite{H}.
For  the Theorem \ref{main2}, the procedure is similar to  Theorem \ref{main} except that we do not have a  diffeomorphism $G$-equivariant
 between $\mathbb{C}\times \mathbb{H}$ and $\mathcal{H}\times \mathbb{R}$, nevertheles because the  natural action of $G$ in 
 $\mathbb{C}\times \mathbb{H}$ translated to $\mathcal{H}\times \mathbb{R}$ is  the clasical action in the firt factor of $G$ on $\mathcal{H}$ and
 in the second factor is the identity, we can established the proof of this theorem.

\section{Preliminaries}
The purpose of this section is to provide some definitions and results about complex Kleinian groups that will be helpful to the reader, 
for more details see \cite{CNS}, \cite{BCN} and \cite{BCN4}.

\subsection{Projective Geometry}
We recall that the complex projective plane
$\mathbb{P}^2_{\mathbb{C}}$ is defined as the orbit space of the usual scalar multiplication action of Lie group $\mathbb{C}^{*}$  in 
$\mathbb{C}^{3}\setminus \{\mathbf{0}\}$ and it is denoted by
$$\mathbb{P}^2_\mathbb{C}:=(\mathbb{C}^{3}\setminus \{\mathbf{0}\})/\mathbb{C}^*,$$
 This  is   a  compact connected  complex $2$-dimensional manifold.
Let $[\mbox{ }]:\mathbb{C}^{3}\setminus\{\mathbf{0}\}\rightarrow
\mathbb{P}^{2}_{\mathbb{C}}$ be the quotient map. If
$\beta=\{e_1,e_2,e_3\}$ is the standard basis of $\mathbb{C}^3$, we
write $[e_j]=e_j$, for $j=1,2,3$, and if
$\mathbf{z}=(z_1,z_2,z_3)\in \mathbb{C}^3\setminus\{\mathbf{0}\}$
then we  write $[\mathbf{z}]=[z_1:z_2:z_3]$. Also, $\ell\subset
\mathbb{P}^2_{\mathbb{C}}$ is said to be a complex line if
$[\ell]^{-1}\cup \{\mathbf{0}\}$ is a complex linear subspace of
dimension $2$. Given two distinct points
$[\mathbf{z}],[\mathbf{w}]\in \mathbb{P}^2_{\mathbb{C}}$, there is a
unique complex projective line passing through $[\mathbf{z}]$ and
$[\mathbf{w}]$, such complex projective line is called a
\emph{line}, for short, and it is denoted by
$\overleftrightarrow{[\mathbf{z}],[\mathbf{w}]}$. Consider the
action of $\mathbb{C}^*$ on  $\textrm{GL}(3,\mathbb{C})$ given by the
usual scalar multiplication, then
$${\rm PGL}(3,\mathbb{C})=\textrm{GL}(3,\mathbb{C})/ \mathbb{C}^*$$ is a Lie group
whose elements are called projective transformations.  Let $[[\mbox{
}]]:\textrm{GL}(3,\mathbb{C})\rightarrow {\rm PGL}(3,\mathbb{C})$ be
the quotient map,   $g\in {\rm PGL}(3,\mathbb{C})$ and
$\mathbf{g}\in \textrm{GL}(3,\mathbb{C})$, we say that $\mathbf{g}$
is a  lift of $g$ if  $[[\mathbf{g}]]=g$. One can show that $ {\rm
PGL}(3,\mathbb{C})$ is a Lie group  which acts transitively,
effectively and by biholomorphisms on $\mathbb{P}^2_{\mathbb{C}}$ by
$[[\mathbf{g}]]([\mathbf{w}])=[\mathbf{g}(\mathbf{w})]$, where
$\mathbf{w}\in \mathbb{C}^3\setminus\{ \mathbf{0}\}$ and
$\mathbf{g}\in \textrm{GL}(3, \mathbb{C})$.

We could have considered the action of the cube roots of unity $\{1,
\omega , \omega ^2 \} \subset \mathbb{C}^*$ on $\textrm{SL}(3,
\mathbb{C})$ given by the usual scalar multiplication, then
$${\rm PSL}(3,\mathbb{C})=\textrm{SL}(3,\mathbb{C})/ \{1, \omega ,
\omega ^2\} \cong \textrm{PGL}(3, \mathbb{C}).$$

We denote by $\textrm{M}_{3 \times 3}(\mathbb{C})$ the space of all $3
\times 3$ matrices with entries in $\mathbb{C}$ equipped with the
standard topology. The quotient space
\[ \textrm{SP}(3, \mathbb{C}):=(\textrm{M}_{3 \times 3}(\mathbb{C}) \setminus \{\mathbf{0}\})/\mathbb{C} ^* \]
is called the space of \emph{pseudo-projective maps of} $\mathbb{P}
_{\mathbb{C}} ^2$ and it is naturally identified with the projective
space $\mathbb{P} _{\mathbb{C}} ^8$. Since $\textrm{GL}(3,\mathbb{C})$ is an
open, dense, $\mathbb{C} ^*$-invariant set of $\textrm{M}_{3 \times
3}(\mathbb{C}) \setminus \{\mathbf{0}\}$, we obtain that the space of
pseudo-projective maps of $\mathbb{P} _{\mathbb{C}} ^2$ is a
compactification of ${\rm PGL}(3, \mathbb{C})$ (or ${\rm PSL}(3,
\mathbb{C})$). As in the case of projective maps, if $\mathbf{s}$ is an
element in $\textrm{M}_{3 \times 3}(\mathbb{C}) \setminus
\{\mathbf{0}\}$, then $[\mathbf{s}]$ denotes the equivalence class
of the matrix $\mathbf{s}$ in the space of pseudo-projective maps of
$\mathbb{P} _{\mathbb{C}} ^2$. Also, we say that $\mathbf{s}\in
\textrm{M}_{3 \times 3}(\mathbb{C})\setminus \{\mathbf{0}\}$ is a lift
of the pseudo-projective map $S$ whenever $[\mathbf{s}]=S$.

Let $S$  be an element in $( \textrm{M}_{3 \times
3}(\mathbb{C})\setminus \{\mathbf{0}\})/ \mathbb{C} ^*$ and $\mathbf{s}$ a
lift to $\textrm{M}_{3 \times 3}(\mathbb{C})\setminus \{\mathbf{0}\}$
of $S$. The matrix $\mathbf{s}$ induces a non-zero linear
transformation $s:\mathbb {C}^{3}\rightarrow \mathbb {C}^{3}$, which
is not necessarily invertible. Let $\textrm{Ker}(s) \subsetneq
\mathbb{C} ^3$ be its kernel and let $\textrm{Ker}(S)$ denote its
projectivization to $\mathbb{P} _{\mathbb{C}} ^2$, taking into account
that $\textrm{Ker}(S):= \varnothing$ whenever
$\textrm{Ker}(s)=\{(0,0,0)\}$.
\subsection{Discontinuous actions on $\Proy ^2$}

\begin{definition}Let $G \subset \textrm{PSL}(3, \mathbb{C})$ be a
 group. We say that $G$ is a \emph{complex Kleinian group} if it acts properly and
discontinuously on an open non-empty $G$-invariant set $U \subset
\Proy ^2$, that means, for each pair of compact subsets $C,D
\subset U$, the set
$$\{g \in G : g (C) \cap D \ne \varnothing\}$$
is \emph{finite}.
\end{definition}

One of the main difficulties in deciding whether a group G is Kleinian complex is to find an open set verifying the definition above. 
In order to give an answer to this problem we propose study  two  mathematical concepts: The Equicontinuity set of $G$ and the Kulkarni discontinuity 
region of $G$. Now, we discusses each one of these definitions.

\subsection{The equicontinuity set} \label{sectionofquicontinuity}

The concept of equicontinuity has long been studied in mathematics. For convenience to reader, we include the definition and notation that
we use in this work.
\begin{definition}
The \emph{equicontinuity set} for a family $\mathcal{F}$ of
endomorphisms of $\Proy ^2$, denoted $\textrm{Eq}(\mathcal{F})$ is
defined as the set of points $z \in \Proy ^2$ for which there is an
open neighbourhood $U$ of $z$ such that $\{f\big| _U : f \in
\mathcal{F}\}$ is a normal family.
\end{definition}

This modern approach and ideas of this concept were studied by Angel Cano  in his Ph.D thesis, however  thanks to reference of Ravi Kulkarni about 
Myrberg works, we found that some of these results  have already been discovered before, only in an arcane mathematical language; however it is fair to 
recognize  Angel Cano for  rediscovering these results and apply with success to  theory of complex Kleinian groups.
\begin{definition} Let $G \subset \textrm{PSL}(3, \mathbb{C})$ be
a discrete group. If
$$G'=\{ S \textrm{ is a pseudo-projective map of } \Proy ^2 :
S \textrm{ is a cluster point of }  G\}; $$
then the \emph{Myrberg limit set} (see \cite{Myr}) is defined as the
set
$$\Lambda _{\textrm{Myr}}(G)=\bigcup _{S \in G '} \textrm{Ker}(S).$$
\end{definition}

Myrberg \cite{Myr} shows that $G$ acts properly and discontinuously
on $\Proy ^2 \setminus \lmyr(G)$.

\begin{theorem}\label{tooltheorem} (See \cite{BCN})
If $G \subset \textrm{PSL}(3, \mathbb{C})$ is a discrete group, then:
\begin{enumerate}
\item The group $G$ acts properly and discontinuously on
$\textrm{Eq}(G)$.
\item The equicontinuity set of $G$ satisfies:
$$\textrm{Eq}(G)=\Proy ^2 \setminus \Lambda _{\textrm{Myr}}(G)$$
\item If $U$ is an open $G$-invariant subset such that $\Proy ^2 \setminus
U$ contains at least three complex lines in general position, then
$U \subset \textrm{Eq}(G)$.
\end{enumerate}
\end{theorem}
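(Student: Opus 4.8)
The plan is to reduce everything to a single fact about how sequences in $\textrm{PSL}(3,\C)$ degenerate inside the compactification $\textrm{SP}(3,\C)\cong\Proy^8$. First I would record two soft observations: $\textrm{Eq}(G)$ is open by definition, and it is $G$-invariant because normality of a family $\{f|_U\}$ is unaffected by pre- or post-composition with a fixed biholomorphism. The main tool is the \emph{compactness lemma}: if $(g_n)$ is a sequence of pairwise distinct elements of the discrete group $G$, then after passing to a subsequence $g_n\to S$ in $\Proy^8$ for some pseudo-projective map $S=[\mathbf{s}]$; since $\textrm{PSL}(3,\C)$ is open and dense in $\Proy^8$ and $G$ is discrete, $\mathbf{s}$ cannot be invertible, so $1\le\textrm{rank}(\mathbf{s})\le 2$; moreover the convergence is uniform on compact subsets of $\Proy^2\setminus\textrm{Ker}(S)$, and $S(\Proy^2\setminus\textrm{Ker}(S))$ is a complex line if $\textrm{rank}(\mathbf{s})=2$ and a single point if $\textrm{rank}(\mathbf{s})=1$. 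Two consequences I would extract at once: (i) if also $g_n^{-1}\to S'$, then $\textrm{Im}(S)\subseteq\textrm{Ker}(S')$ and $\textrm{Im}(S')\subseteq\textrm{Ker}(S)$, since otherwise $S'\circ S$ would agree with the identity on a non-empty open set, forcing $\textrm{rank}(\mathbf{s}'\mathbf{s})=3$; and (ii) $\lmyr(G)$ is closed, because the set $G'$ of cluster points is the derived set of $G$ in the compact space $\Proy^8$, and $\textrm{Ker}$ is upper semicontinuous (a limit of kernel vectors is a kernel vector).

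For part (2) I would prove the two inclusions separately. For $\Proy^2\setminus\lmyr(G)\subseteq\textrm{Eq}(G)$: if $z\notin\lmyr(G)$, by (ii) there is a compact neighbourhood $K$ of $z$ disjoint from $\lmyr(G)$; any sequence of distinct $g_n$ subconverges to some $S$ with $\textrm{Ker}(S)\subseteq\lmyr(G)$, hence $\textrm{Ker}(S)\cap K=\varnothing$, so by the compactness lemma $g_n\to S$ uniformly on $K$, and $\{g|_K\}$ is a normal family, i.e. $z\in\textrm{Eq}(G)$. For $\textrm{Eq}(G)\subseteq\Proy^2\setminus\lmyr(G)$: suppose $z\in\textrm{Eq}(G)\cap\textrm{Ker}(S)$ with $g_n\to S$; on a neighbourhood $W\subseteq\textrm{Eq}(G)$ of $z$ the $g_n|_W$ subconverge uniformly to a \emph{continuous} map $\phi$, and since $g_n\to S$ off $\textrm{Ker}(S)$ we get $\phi=S$ on the dense set $W\setminus\textrm{Ker}(S)$. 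When $\textrm{rank}(\mathbf{s})=2$ this is absurd: approaching $z=\textrm{Ker}(S)$ along the distinct lines through $z$ makes $\phi(z)$ equal to each of the (distinct) points into which $S$ collapses those lines. When $\textrm{rank}(\mathbf{s})=1$ one only gets that $g_n$ collapses $W$ to the point $\textrm{Im}(S)$; closing this case seems to require the finer structure of rank-one degenerations (e.g. passing to $g_n^{-1}\to S'$ and using (i)), and I flag it below as part of the main obstacle. Granting it, $\textrm{Eq}(G)\cap\lmyr(G)=\varnothing$, which combined with the first inclusion gives equality.

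Part (1) is then short. Let $C,D\subseteq\textrm{Eq}(G)$ be compact and suppose infinitely many distinct $g_n\in G$ satisfy $g_n(C)\cap D\neq\varnothing$; choose $c_n\in C$ with $g_n(c_n)\in D$ and pass to a subsequence with $c_n\to c\in C$, $g_n\to S$, $g_n^{-1}\to S'$. By part (2), $\textrm{Ker}(S)\subseteq\lmyr(G)$ is disjoint from $\textrm{Eq}(G)\supseteq C$, so $g_n\to S$ uniformly on $C$ and $g_n(c_n)\to S(c)$; since $D$ is compact, $S(c)\in D\subseteq\textrm{Eq}(G)$, while $S(c)\in\textrm{Im}(S)\subseteq\textrm{Ker}(S')\subseteq\lmyr(G)$ by (i), contradicting $\textrm{Eq}(G)\cap\lmyr(G)=\varnothing$. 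Hence the action on $\textrm{Eq}(G)$ is properly discontinuous.

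For part (3), write $\Lambda=\Proy^2\setminus U$; it is closed, $G$-invariant, and contains three lines $\ell_1,\ell_2,\ell_3$ in general position, and by part (2) it suffices to show $\textrm{Ker}(S)\subseteq\Lambda$ for every $S\in G'$. Fix $S$ with $g_n\to S$ and $g_n^{-1}\to S'$. Because $\textrm{Ker}(S)$ is a point or a line, general position guarantees that it fails to contain at least two of the $\ell_i$, and, when it is a point, is disjoint from at least one of them; tracking the lines $g_n(\ell_i)\subseteq\Lambda$ and applying the compactness lemma then forces $\textrm{Im}(S)\subseteq\Lambda$. The same argument applied to $S'$ gives $\textrm{Im}(S')\subseteq\Lambda$; and \emph{unless} both $S$ and $S'$ have rank one, (i) gives $\textrm{Ker}(S)=\textrm{Im}(S')$ (a point equals a point, or a line inside a line equals it), so $\textrm{Ker}(S)\subseteq\Lambda$. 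The leftover case, where $S$ and $S'$ are both rank one, is the main obstacle: then $\textrm{Im}(S),\textrm{Im}(S')$ are single points and the above only puts one point of the kernel \emph{line} $\textrm{Ker}(S)$ into $\Lambda$. I expect to close it by producing enough further cluster points whose kernels sweep out all of $\textrm{Ker}(S)$ --- concretely by analysing the renormalised limits of the composites $g_mg_n^{-1}$ as $m,n\to\infty$ appropriately, together with the limits of the lines $g_n(\ell_i)$, all of which remain in $\Lambda$. Once $\textrm{Ker}(S)\subseteq\Lambda$ for every $S\in G'$, we obtain $\lmyr(G)\subseteq\Lambda$, i.e. $U\subseteq\Proy^2\setminus\lmyr(G)=\textrm{Eq}(G)$, finishing the proof.
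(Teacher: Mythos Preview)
The paper does not prove this theorem at all: it is stated in the Preliminaries with the parenthetical ``(See \cite{BCN})'' and no argument, so there is no in-paper proof to compare your proposal against. Your sketch is therefore not a reconstruction of the paper's proof but an independent attempt at the result of Barrera--Cano--Navarrete.

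That said, your outline is the right skeleton---working in the compactification $\textrm{SP}(3,\C)\cong\Proy^8$, using that cluster points $S$ of a discrete $G$ have rank $1$ or $2$, and pairing $g_n\to S$ with $g_n^{-1}\to S'$---but the gaps you yourself flag are genuine. In part~(2), the rank-$1$ case of the inclusion $\textrm{Eq}(G)\subseteq\Proy^2\setminus\lmyr(G)$ is not closed by the continuity argument you give: a uniform limit equal to the constant $\textrm{Im}(S)$ on $W$ is a perfectly good holomorphic map, so normality alone is not contradicted, and one really does need the finer analysis you defer. Likewise in part~(3), when both $S$ and $S'$ have rank~$1$ your argument only pins the single point $\textrm{Im}(S')$ inside the kernel line $\textrm{Ker}(S)$, and the suggestion to look at limits of $g_mg_n^{-1}$ is a plausible strategy but not yet a proof. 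If you want to complete this, the cleanest route is to follow \cite{BCN} directly, where these degenerate cases are handled by a more careful bookkeeping of how sequences of lines $g_n(\ell_i)$ accumulate and by exploiting the $\lambda$-lemma/normal-family structure on the complement of three lines in general position.
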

\subsection{Kulkarni discontinuity region} \label{sectionofKuldisreg}

In 1978 Ravi Kulkarni motivated by the  study of classical theory of Kleinian groups defined a limit set
for groups of homeomorphism acting on locally compact Hausdorff space. For  convenience to reader, we explain this  construction in the context of
projective spaces
\begin{definition}
If $G \subset \textrm{PSL}(3, \mathbb{C})$ is a group, Kulkarni defines
(see \cite{Kul}):
\begin{itemize}
\item The set $L_0(G)$ as the closure of the set of points in $\Proy
^2$ with infinite isotropy group.

\item The set $L_1(G)$ is the closure of the set of cluster points of
the orbit $G z$, where $z$ runs over $\Proy ^2 \setminus L_0(G)$.

\item The set $L_2(G)$ is the closure of the set of cluster points of
the family of compact sets $\{g (K) : g \in G\} $, where $K$ runs
over all the compact subsets of $\Proy ^2 \setminus (L_0(G) \cup
L_1(G))$.
\end{itemize}

The \emph{Kulkarni limit set} of $G$ is defined as
$$\lkul (G)=L_0(G) \cup L_1(G) \cup L_2(G).$$

The \emph{Kulkarni discontinuity region} of $G$ is defined as:
$$\Omega (G) = \Proy ^2 \setminus \lkul (G).$$
\end{definition}

Kulkarni proves in \cite{Kul} that $G$ acts properly and
discontinuously on the set $\Omega(G)$. However, $\Omega(G)$ is not
necessarily the maximal open subset of $\Proy ^2$ where $G$ acts
properly and discontinuously. 

We notice that the definition of Kulkarni's limit set is a generalization of  limit set of hyperbolic geometry. A difficulties of Kukarni's approach it is very
hard to give an explicit computations of these limit set. In the P.H.D thesis of Juan Navarrete (\cite{Na0}, \cite{Na}) we can find these computations for the cyclic
subgroups of $\textrm{PSL}(3,\mathbb{C})$ and also for discrete subgroups of $\textrm{PU}(2,1)$.

%It is proved in \cite{BCN} that
%$\textrm{Eq}(G ) \subset \Omega(G)$, whenever $G \subset
%\textrm{PSL}(3, \mathbb{C})$ is discrete.

A third possibility to see if a group is a complex Kleinian group consist  to postulate the existence of an open maximal 
where the group acts properly and discontinuously, but this approach does not say how to build this  $G$-invariant open set, 
however when we ensure their existence this  open set has good properties \cite{BCN5}.

\subsection{Four complex Kleinian groups}
This section is devoted of complex Kleinian groups of $\textrm{PSL}(3,\mathbb{C})$ with the maximum numbers of complex projective lines contained 
in its Kulkarni's
limit set is four. For simplicity  we called this kind of  group \emph{four complex Kleinian groups}. In \cite{BCN4}, the authors gave a 
caractherization
of  four complex Kleinian groups. Because in the present work, the article \cite{BCN4} is essential,  we reproduce briefly the main ideas 
and the notation used.

Let  $A\in \textrm{SL}(2,\mathbb{Z})$, with $|tr(A)|>2$, we define the  following discrete  subgroup of $\textrm{PSL}(3,\mathbb{C})$, 
called  hyperbolic toral group

\[
 G_A=
\left \{ \left (
\begin{array}{lll}
A^k  & \mathbf{b} \\
 \mathbf{0} & 1\\
\end{array}
\right ) \, \Big| \,\,  \mathbf{b}\in M(2\times 1,\mathbb{Z}), \, k\in
\mathbb{Z} \right \},
\]

The group $G_A$ is a four complex Kleinian group and moreover if $G$ is a four complex Kleinian groups, then there exist $G_A$ such that
$[G:G_A]\leq 8$.

It is possible to conjugate the  group $G_A$ to a group,  still denoted by $G_A$, where  each element is of the form

\[\left (
\begin{array}{lll}
\lambda^k & 0 & ny_0+mx_0 \\
 0 & \lambda^{-k}& nx_0 + mz_0\\
 0&0&1
\end{array}
\right )
\]
where $k$,$n$ and $m$ run over $\mathbb{Z}$ and $\lambda$ is  one of the eigenvalues of $A$. 
At this point it is not hard to see the Kulkarni discontinuity region consist of  four  disjoint copies of $\mathbb{H}^{\pm}\times \mathbb{H}^{\pm}$, 
where $\mathbb{H}^+$ is the  upper half plane and $\mathbb{H}^{-}$ is the  lower half plane.

\subsection{The $\sol$ geometries}
 $\sol$  is one of the eight geometries defined por William Thurston in his famous program of geometrization  of compact three manifolds. We define  
 the group $\sol$ as follows: given that  the space $\mathbb{R}^2\times \mathbb{R}$ we define the  group operation:

$$\left(
\begin{array}{cc}

\left(
\begin{array}{c}
x_1\\
y_1\\
\end{array}
\right),t_1

\end{array}
\right)
\cdot 
\left(
\begin{array}{cc}

\left(
\begin{array}{c}
x_2\\
y_2\\
\end{array}
\right),t_2

\end{array}
\right)
= 
\left(
\begin{array}{cc}

\left(
\begin{array}{c}
x_1+e^{t_1}x_2\\
y_1+e^{-t_1}y_2\\
\end{array}
\right),t_1+t_2

\end{array}
\right)
$$

With the previous operation, we have that $\mathbb{R}^3$ is a Lie group and it is denoted by $\sol$. It is well know that 
$\sol$ is  3-Riemannian manifold
with metric  $ds^2= e^{2t}dx^2 + e^{-2t}dy^2 +dt^2$, when the group $\sol$  is the isometry group. An  interesting fact about the $\sol$ geometries, 
is given by the following theorem of \cite{H}, that we state for convenience:

\begin{proposition}
 Let $A$, $B$ in $\textrm{GL}(2,\mathbb{Z})$ be two matrices with traces of absolute value strictly larger than 2. The semi-direct product 
 $\mathbb{Z}^2\rtimes_{A}\mathbb{Z}$ and $\mathbb{Z}^2\rtimes_{B} \mathbb{Z}$ are
 
 \begin{itemize}
  \item [a)] isomorphic if and only if $A$ is conjugate in $\textrm{GL}(2,\mathbb{Z})$ to $B$ or $B^{-1}$.
  \item [b)] quasi-isometric in all cases.
 \end{itemize}

\end{proposition}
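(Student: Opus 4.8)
The plan is to prove the two parts separately. Throughout write $\Gamma_A=\mathbb{Z}^2\rtimes_A\mathbb{Z}$ with product $(v,n)(w,m)=(v+A^nw,\,n+m)$, and similarly $\Gamma_B$; I will repeatedly use that $|\operatorname{tr}(A)|>2$ makes $A$ hyperbolic, i.e.\ it has real eigenvalues $\lambda,\lambda^{-1}$ with $|\lambda|>1$, so that for every $n\neq0$ the matrix $A^n$ has no eigenvalue $1$ and $A^n-I$ is invertible over $\mathbb{Q}$ (and likewise for $B$). The sufficiency in part (a) is then a direct computation: if $B=PAP^{-1}$ with $P\in\textrm{GL}(2,\mathbb{Z})$, the map $(v,n)\mapsto(Pv,n)$ is an isomorphism $\Gamma_A\to\Gamma_B$ — it respects the product precisely because $PA^nP^{-1}=B^n$ — and if $B=PA^{-1}P^{-1}$ then $(v,n)\mapsto(Pv,-n)$ works for the same reason with $A^{-1}$ in place of $A$.

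For the necessity in part (a) the key step is to characterize $\mathbb{Z}^2\times\{0\}$ intrinsically: I would show it is the unique maximal normal abelian subgroup of $\Gamma_A$, hence characteristic. Suppose $N\trianglelefteq\Gamma_A$ is abelian and contains some $(v,n)$ with $n\neq0$; conjugating $(v,n)$ by $(0,1)$ and then multiplying by $(v,n)^{-1}$ puts $((A-I)v,0)$ in $N$, and commutativity with $(v,n)$ forces $A^n$ to fix $(A-I)v$, so $(A-I)v=0$ and hence $v=0$ by invertibility of $A-I$ over $\mathbb{Q}$; thus $(0,n)\in N$. Conjugating $(0,n)$ by the various $(w,0)$ and multiplying by $(0,n)^{-1}$ then puts $((I-A^n)w,0)$ in $N$ for all $w\in\mathbb{Z}^2$, a set spanning $\mathbb{Q}^2$, and commutativity with $(0,n)$ forces $A^n=I$ on $\mathbb{Q}^2$ — a contradiction. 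Hence every normal abelian subgroup lies in $\mathbb{Z}^2\times\{0\}$, which is itself normal and abelian, proving the claim. Given this, any isomorphism $\phi\colon\Gamma_A\to\Gamma_B$ must send $\mathbb{Z}^2\times\{0\}$ onto $\mathbb{Z}^2\times\{0\}$, so $\phi$ restricts to some $P\in\textrm{GL}(2,\mathbb{Z})$ and descends to an automorphism $\epsilon\in\{\pm1\}$ of the quotient $\mathbb{Z}$; comparing the conjugation action of a lift of the generator of $\mathbb{Z}$ on the normal $\mathbb{Z}^2$ (which is $A$ in $\Gamma_A$ and $B$ in $\Gamma_B$) yields $PAP^{-1}=B^{\epsilon}$, i.e.\ $B$ is conjugate in $\textrm{GL}(2,\mathbb{Z})$ to $A$ (when $\epsilon=1$) or to $A^{-1}$ (when $\epsilon=-1$).

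For part (b) the plan is to realize $\Gamma_A$ as virtually a cocompact lattice in the Lie group $\sol=\mathbb{R}^2\rtimes\mathbb{R}$ and then invoke standard geometric group theory. After replacing $A$ by $A^2$ — i.e.\ passing to the subgroup $\mathbb{Z}^2\rtimes_{A^2}\mathbb{Z}$, which has index at most $2$ — one may assume $\det A=1$ and that $A$ has a positive eigenvalue $\lambda>1$; diagonalizing $A$ over $\mathbb{R}$ by some $Q\in\textrm{GL}(2,\mathbb{R})$ identifies $A$ with $\operatorname{diag}(\lambda,\lambda^{-1})=\operatorname{diag}(e^{\log\lambda},e^{-\log\lambda})$, the time-$\log\lambda$ element of the flow defining $\sol$, so that $(v,n)\mapsto(Qv,\,n\log\lambda)$ embeds this finite-index subgroup as a cocompact lattice in $\sol$. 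By the \v{S}varc--Milnor lemma every cocompact lattice in $\sol$ is quasi-isometric to $\sol$ equipped with a left-invariant Riemannian metric, and a finite-index subgroup is quasi-isometric to the ambient group; therefore $\Gamma_A$ and $\Gamma_B$ are both quasi-isometric to $\sol$, hence to each other, for all hyperbolic $A$ and $B$.

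The step I expect to be the main obstacle is the necessity direction of part (a): making precise that $\mathbb{Z}^2\times\{0\}$ is recognizable from the group structure alone (so that it is preserved by any isomorphism) and then extracting both the conjugating matrix $P$ and the sign $\epsilon$ from the compatibility of the two conjugation actions. By contrast, the sufficiency in (a) is a one-line verification, and (b) reduces to the quasi-isometry invariance properties of lattices and finite-index subgroups once the embedding of a finite-index subgroup as a cocompact lattice in $\sol$ is in hand.
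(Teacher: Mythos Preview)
Your argument is correct. The characterization of $\mathbb{Z}^2\times\{0\}$ as the unique maximal abelian normal subgroup of $\Gamma_A$ goes through as you outline (one cosmetic point: your description of the eigenvalues as $\lambda,\lambda^{-1}$ tacitly assumes $\det A=1$; when $\det A=-1$ the eigenvalues are $\lambda,-\lambda^{-1}$, but the only fact you actually use---that $A^n-I$ is invertible over $\mathbb{Q}$ for every $n\neq0$---still holds, so nothing is lost). The extraction of $P\in\mathrm{GL}(2,\mathbb{Z})$ and $\epsilon\in\{\pm1\}$ from an isomorphism, and the lattice embedding of a finite-index subgroup into $\sol$ followed by \v{S}varc--Milnor, are both standard and correctly sketched.

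There is, however, nothing to compare: the paper does not prove this proposition at all. It is quoted verbatim from \cite{H} (de la Harpe, \emph{Topics in Geometric Group Theory}) and stated ``for convenience'' as background for the $\sol$ geometry; the introductory sentence ``An interesting fact about the $\sol$ geometries is given by the following theorem of \cite{H}'' makes this explicit. So your write-up supplies an argument that the paper deliberately omits, and the approach you take is essentially the one de la Harpe gives.
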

Then  $\sol/(\mathbb{Z}^2\rtimes_{A}\mathbb{Z})$ gives examples of compact three manifolds where the topological type is determined by 
the fundamental group.For more details about this subject the reader we can  see \cite{Sc},\cite{Th} and \cite{H}.	
	
	\section{Foliation of $\polyDisc$ by $\sol$}
	
	\begin{definition}
		Given $\lambda > 0$, and $\left(\begin{smallmatrix}
			a & b \\ c & d
		\end{smallmatrix}\right)$ a nondegenerated real matrix. $\sol$ is the 
		group consisting of all matrices 
		 of the form
		\[ \solRep, \]
		with $t, x, y \in\R$ arbitrary.
	\end{definition}
	
	\begin{theorem}
		$\sol$ is isomorphic to the semidirect product $ \R^2\rtimes \R$
	\end{theorem}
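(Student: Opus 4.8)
The plan is to put convenient linear coordinates on $\sol$ and then recognize the resulting group law as that of an explicit semidirect product $\R^2\rtimes_\phi\R$. First I would observe that, because $\left(\begin{smallmatrix} a & b \\ c & d\end{smallmatrix}\right)$ is nondegenerate (and $\lambda\neq 1$, as is implicit here, $\lambda$ being an eigenvalue of a matrix in $\textrm{SL}(2,\mathbb{Z})$ of trace of absolute value $>2$), the assignment
\[
(x,y,t)\longmapsto \solRep
\]
is a bijection from $\R^3$ onto the underlying set of $\sol$: the $(1,1)$-entry determines $\lambda^{t}$ hence $t$, and then the last column, which equals $\left(\begin{smallmatrix} a & b \\ c & d\end{smallmatrix}\right)\left(\begin{smallmatrix} x \\ y\end{smallmatrix}\right)$, determines $(x,y)$ by invertibility of that block; surjectivity is the definition of $\sol$. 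It is then convenient to reparametrize by $\mathbf{u}=(a\,x+b\,y,\; c\,x+d\,y)\in\R^2$, writing $M(\mathbf{u},t)$ for the corresponding matrix, so that every element of $\sol$ is uniquely $M(\mathbf{u},t)$ with $(\mathbf{u},t)\in\R^2\times\R$.

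Next I would carry out the single matrix multiplication that the statement rests on. Writing $D_t=\left(\begin{smallmatrix}\lambda^{t} & 0 \\ 0 & \lambda^{-t}\end{smallmatrix}\right)$, a direct computation gives
\[
M(\mathbf{u}_1,t_1)\,M(\mathbf{u}_2,t_2)=M\bigl(\mathbf{u}_1+D_{t_1}\mathbf{u}_2,\; t_1+t_2\bigr).
\]
In particular the right-hand side is again of the form $M(\,\cdot\,,\,\cdot\,)$, which confirms that $\sol$ is closed under products and (taking $\mathbf{u}_2=-D_{-t_1}\mathbf{u}_1$, $t_2=-t_1$) under inverses, so it is a subgroup of $\textrm{GL}(3,\R)$; but the displayed formula is exactly what is needed for the isomorphism.

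Finally I would assemble the isomorphism. Since $\lambda>0$, the map $t\mapsto\lambda^{t}$ is a homomorphism $\R\to\R_{>0}$, so $\phi\colon\R\to\textrm{GL}(2,\R)$, $\phi(t)=D_t$, is a group homomorphism, and $\R^2\rtimes_\phi\R$ is a well-defined group with multiplication $(\mathbf{u}_1,t_1)\cdot(\mathbf{u}_2,t_2)=(\mathbf{u}_1+\phi(t_1)\mathbf{u}_2,\;t_1+t_2)$. The map $\Phi\colon\R^2\rtimes_\phi\R\to\sol$, $\Phi(\mathbf{u},t)=M(\mathbf{u},t)$, is a bijection by the first step, and the product formula above says precisely that $\Phi$ carries the semidirect-product law to matrix multiplication; hence $\Phi$ is an isomorphism. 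After the substitution $t\mapsto t\log\lambda$ the action $\phi$ becomes $(x,y)\mapsto(e^{t}x,e^{-t}y)$, so this is the same group $\sol$ introduced in the Preliminaries.

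There is no genuine obstacle: every step is a routine computation. The only point that deserves explicit mention is where the hypotheses enter — the nondegeneracy of $\left(\begin{smallmatrix} a & b \\ c & d\end{smallmatrix}\right)$ together with $\lambda\neq1$ is exactly what makes $\Phi$ a bijection rather than merely a surjective homomorphism — and one should pin down the intended action $\phi$ when writing $\R^2\rtimes\R$, since the isomorphism type of the semidirect product depends on it.
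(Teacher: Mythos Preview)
Your argument is correct and self-contained: the reparametrization by $\mathbf{u}=\left(\begin{smallmatrix} a & b \\ c & d\end{smallmatrix}\right)\left(\begin{smallmatrix} x \\ y\end{smallmatrix}\right)$ and the product computation $M(\mathbf{u}_1,t_1)M(\mathbf{u}_2,t_2)=M(\mathbf{u}_1+D_{t_1}\mathbf{u}_2,\,t_1+t_2)$ are exactly what is needed, and your remark that the nondegeneracy of the $2\times2$ block together with $\lambda\neq1$ is what makes $\Phi$ bijective is the right place to invoke the hypotheses.

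There is nothing to compare on the level of method, however, because the paper does not supply its own proof: it simply writes ``See \cite{H} for a proof'' and moves on. So your write-up is strictly more than what the paper offers; it gives the elementary matrix computation that the cited reference would contain. If anything, you could shorten it by omitting the closure-under-products-and-inverses aside (this is immediate once the product formula is displayed) and by stating up front that the claim is for the specific action $\phi(t)=\mathrm{diag}(\lambda^{t},\lambda^{-t})$, since as you note $\R^2\rtimes\R$ is ambiguous without specifying $\phi$.
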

	
	\begin{proof}
		See \cite{H} for a proof.
	\end{proof}
	
	\begin{definition}
		Let $z_1, z_2 \in \Hbb$. We define the action of $\sol$ in 
		$\Hbb\times\Hbb$ by
		\[ \solRep \, \begin{pmatrix}
			z_1 \\ 
			z_2 \\
			1
		\end{pmatrix} = \begin{pmatrix}
		 \lambda^t\, z_1 + a\,x + b\,y\\
		 \lambda^{-t}\, z_2 + c\,x + d\, y\\
		 1
		\end{pmatrix}. \]
	\end{definition}

	\begin{theorem}
	The natural action of the group $\sol$ on $\Hbb$ verify  the following  statements
	
	\begin{itemize}
	\item[i)] The  action  is free.
	\item[ii)] For each $z= (z_1, z_2)\in \Hbb$ the  function $f_z:\sol \rightarrow \Hbb$ defined by
	$f_z(g)= gz$ is a  smooth embedding.
	\item[iii)] Let  $e_1, e_2, e_3, e_4$ the canonical basis of $\mathbb{R}^4$, then 
	$$X= -\ln(\lambda)(\lambda^{-t}y_2e_2 + \lambda^ty_1e_4).$$
	is the normal field of  the  embedding $f_z(\sol)$ in $\Hbb$, moreover this  vector field is smooth.
	
	\end{itemize}
	
	\end{theorem}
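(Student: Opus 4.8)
The plan is to fix global coordinates $(x_1,y_1,x_2,y_2)$ on $\polyDisc\subset\R^4$ by setting $z_j=x_j+iy_j$, so that $e_1=\partial_{x_1}$, $e_2=\partial_{y_1}$, $e_3=\partial_{x_2}$, $e_4=\partial_{y_2}$, and to write the fixed base point as $z_1=p_1+iq_1$, $z_2=p_2+iq_2$ with $q_1,q_2>0$. In these coordinates the map $f_z$ becomes
\[ f_z(t,x,y)=\bigl(\lambda^tp_1+ax+by,\ \lambda^tq_1,\ \lambda^{-t}p_2+cx+dy,\ \lambda^{-t}q_2\bigr). \]
I would first observe that the image is contained in the hypersurface $\{y_1y_2=q_1q_2\}$ and is in fact equal to it: for a point on that hypersurface, $t=\log_\lambda(y_1/q_1)$ is forced, and then $(x,y)$ is recovered from a linear system whose matrix is $\left(\begin{smallmatrix}a&b\\c&d\end{smallmatrix}\right)$, invertible by hypothesis. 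This single computation yields both injectivity of $f_z$ and an explicit \emph{continuous} inverse defined on $f_z(\sol)$. Item (i) is then immediate, since a fixed point $gz=z$ reads $f_z(g)=f_z(e)$, hence $g=e$. (Throughout one uses $\lambda\neq1$; for $\lambda=1$ the group is abelian, the action is not free and the assertions fail, so $\lambda\neq1$ is an implicit hypothesis, as indeed $\lambda$ will be an eigenvalue of a hyperbolic integral matrix.)

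Next I would check that $f_z$ is an immersion by differentiating:
\[ \partial_tf_z=\ln\lambda\,(\lambda^tp_1,\lambda^tq_1,-\lambda^{-t}p_2,-\lambda^{-t}q_2),\qquad \partial_xf_z=(a,0,c,0),\qquad \partial_yf_z=(b,0,d,0). \]
The vectors $\partial_xf_z$ and $\partial_yf_z$ are independent because $\det\left(\begin{smallmatrix}a&b\\c&d\end{smallmatrix}\right)\neq0$, and $\partial_tf_z$ is not in their span since its second entry $\ln\lambda\cdot\lambda^tq_1$ is nonzero while theirs vanish; thus $df_z$ has constant rank $3$. So $f_z$ is a smooth injective immersion, and since it is a homeomorphism onto its image by the continuous inverse constructed above, it is a smooth embedding. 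This proves (i) and (ii).

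For (iii) I would compute the orthogonal complement of $\operatorname{span}\{\partial_tf_z,\partial_xf_z,\partial_yf_z\}$ with respect to the ambient Euclidean metric of $\R^4$. Orthogonality to $\partial_xf_z$ and $\partial_yf_z$ kills the first and third components of a normal vector — the $2\times2$ system involved has the invertible matrix $\left(\begin{smallmatrix}a&c\\b&d\end{smallmatrix}\right)$ — so a normal vector has the form $Ae_2+Be_4$; orthogonality to $\partial_tf_z$ then forces $\lambda^tq_1A=\lambda^{-t}q_2B$, whence the normal line is spanned by $\lambda^{-t}q_2\,e_2+\lambda^tq_1\,e_4$. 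Rescaling by $-\ln\lambda$ reproduces exactly $X=-\ln\lambda(\lambda^{-t}y_2e_2+\lambda^ty_1e_4)$ with $y_1=q_1$, $y_2=q_2$ the imaginary parts of $z$. As a cross-check, $X$ is simply $-\ln\lambda$ times the Euclidean gradient of the defining function $y_1y_2$ of the hypersurface $f_z(\sol)$, which is automatically normal to it; this also shows that $X$ extends to a smooth vector field on all of $\polyDisc$, and along $f_z(\sol)$ it is smooth because only the parameter $t$ enters.

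The argument is essentially bookkeeping once the coordinates are fixed, so I do not anticipate a real obstacle. The one point that needs to be stated with care is the meaning of ``normal'' here — with respect to the ambient Euclidean structure of $\R^4$, not the product hyperbolic metric on $\polyDisc$ — together with the convention that the symbols $y_1,y_2$ appearing in the formula for $X$ denote the imaginary parts of the fixed base point $z$.
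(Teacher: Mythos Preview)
Your proof is correct and follows essentially the same route as the paper: the Jacobian computation for the immersion, the explicit continuous inverse for the embedding, and the identification of the normal direction all match the paper's argument, with the paper computing $X$ via the $4\times4$ triple-product determinant rather than by direct orthogonality. Your extra observations that $f_z(\sol)$ is exactly the level set $\{y_1y_2=q_1q_2\}$ and that $X$ is $-\ln\lambda$ times the Euclidean gradient of $y_1y_2$ are nice clarifications not made explicit in the paper, but they do not change the underlying strategy.
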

	
	\begin{proof}
	 \begin{itemize}
	  \item[i)] Let $z = (z_1, z_2) \in \polyDisc$, and assume that $\gamma\in\sol$ is 	such that $\gamma\cdot z = z$. Let $z_k = x_k + i y_k$. Taking 
		imaginary parts in the action, we get
		\begin{align*}
			\lambda^t y_1 &= y_1, & \lambda^{-t} y_2 &= y_2,
		\end{align*}
		so, $\lambda^t = \lambda^{-t} = 1$, since each $y_k > 0$. 
	Taking real parts,
		\begin{align*}
		x_1 + a\,x + b\,y &= x_1,\\
		x_2 + c\,x + d\,y &= x_2,
		\end{align*}
		and since $\left(\begin{smallmatrix}
		a & b\\
		c & d
		\end{smallmatrix}\right)$, is non degenerated, $x = y = 0$.
		
	\item[ii)] From de definition of the action, it is clear that $f_z$ is smooth in 
		$t, x, y$, which parametrizes $\sol$.
		By straighforward computations we have that $df_z$ has 
		jacobian matrix given by,
		\[ [df_z] = \begin{pmatrix}
%			\ln(\lambda)\, \lambda^t\, z_1 & a & b \\
%		   -\ln(\lambda)\, \lambda^{-t}\, z_2 & c & d \\
			\ln(\lambda)\, \lambda^t\, x_1 & a & b \\
			\ln(\lambda)\, \lambda^t\, y_1 & 0 & 0 \\
			-\ln(\lambda)\, \lambda^{-t}\, x_2 & c & d\\
			-\ln(\lambda)\, \lambda^{-t}\, y_2 & 0 & 0
		\end{pmatrix}. \]
		The vectors $(a,0,c,0)^t$, $(b, 0, d, 0)^t$ are linearly independent by 
		the condition in the coefficients $a, b, c, d$. Since $y_1 > 0$, the 
		first column in the Jacobian matrix is linearly independent with the 
		previous vectors. Therefore, $f_z$ in an immersion. 
		
		If $z_k = x_y + i\,y_k$, and $z' = (x_1', y_1', x_2', y_2')\in 
		\polyDisc$, define 
		$t$ such that $\lambda^t = y_1'/y_1$, and $(x', y')$ such that

		\begin{align*}
			\begin{pmatrix}
			 x'\\ y'
			\end{pmatrix}
			&= 
			\begin{pmatrix}
				a &  b \\
				c &  d
			\end{pmatrix}^{-1}
			\begin{pmatrix}
				x_1' - \frac{y_1'\,x_1}{y_1}\\
				x_2' - \frac{y_2'\,y_2}{y_1'}
			\end{pmatrix}		
		\end{align*} 
	
		These values for $(t, x', y')$ define a mapping $F$, from $\polyDisc$ 
		to 
		$\R^3\cong\sol$, such that $F\circ f_z = Id$. Note that $F$ is a left 
		continuous inverse for $f_z$, and hence, $f_z$ is an homeomorphism. 
	\item[iii)] Since the $e_k$ form an orthonormal basis for the euclidean metric, the 
		vector product in this basis can be calculated by the method of the 
		determinat
		\[ X = \ln(\lambda) \left|\begin{matrix}
		e_1  & e_2 & e_3 & e_4\\
		\lambda^t\, x_1 & \lambda^t\, y_1 & -\lambda^{-t}\, x_2 & 
		-\lambda^{-t}\, y_2\\
		a & 0 & c & 0\\
		b & 0 & d & 0 
		\end{matrix}		
		\right|. \]
		The result follows. Finally,let $z \in \polyDisc$ be arbitrary, then trivially $z$ belongs to the 
		leaf given by $f_z$. Since $\sol$ acts freely, $z$ is the image of the 
		identity element by $f_z$. By the expression given in theorem 
		\ref{thm:Xexpression}, 
		\[ X = -\ln(\lambda)\,\left(y_2\,e_2 + y_1\,e_4\right). \]
		Therefore, $X$ varies smoothly.
	\end{itemize}	
		
	\end{proof}

	By this theorem, if we vary $z$, we obtain a foliation $f_z(\sol)$ of 
	$\polyDisc$ by copies of $\sol$. We proceed to show that this foliation is 
	globally rectifiable. In the sequel, we will use the Euclidean metric in 
	$\polyDisc$ given by the natural identification with a subspace of $\R^4$.
	
	\begin{definition}
		For $z\in\polyDisc$ fixed, define the normal vector field to 
		$f_z(\sol)$,  $X$, by the 
		triple vector product $\partial_tf_z\wedge \partial_x f_z \wedge 
		\partial_y f_z$.
	\end{definition}

	%Note that by varying $z$  over $\polyDisc$, we can extend $X$ globally, 
	%so that, $X$ is normal to any leaf in the foliation induced by $\sol$.

	%\section{Dynamics of the normal field}
	
	%In the previous section we described how $\sol$ foliates the space 
	%$\polyDisc$ and we gave a closed expression for a smooth vector field $X$ 
	%normal to any leaf in the foliation. In this section, we will study the 
	%dynamics of the integral curves.
	
	\section{Geometry of the leaves}

	In the previous section we described how $\sol$ foliates the space
	$\polyDisc$ and gave a closed expression for a smooth vector field $X$
	normal to any leaf in the foliation. In this section, we will study the
	dynamics of the integral curves. Let $\psi(t)$ be an integral curve of
        the field, with components $\psi = (z_1(t), z_2(t))$, and
        $z_k = x_k + y_k i$ as before. From the definition of the field, it
        follows that  the integral curves
        satisfy the following set of equations:

        \begin{align*}
          \dot{x}_k &= 0,\\
          \dot{y}_k &= y_k.
        \end{align*}

        These  equations can be readily solved to get
        constant solutions in the real part of each copy of the hyperbolic
        and exponentials in the imaginary parts. The flow
        of the normal field defines a one parameter family of diffeomorphisms
        in $\polyDisc$. From now on, we will denote it by $\psi_t(z_1, z_2)$,
        where

        \[\psi_t(z_1, z_2) = (x_1, e^t y_1, x_2, e^t y_2),\]

        where the parameter $t$ is completely inextensible, i.e. $t \in
        \mathbb{R}$.

        \begin{theorem}
          The flow $\psi_t$ rules $\polyDisc$ by geodesics.
        \end{theorem}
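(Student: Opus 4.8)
The plan is to compute the flow lines of $\psi_t$ explicitly, recognize each one as a maximal straight ray inside $\polyDisc\subset\R^4$, and then invoke the fact that $\polyDisc$ is convex, so that its flat geodesics are exactly the straight segments it contains.

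First I would fix $p=(x_1,y_1,x_2,y_2)\in\polyDisc$, so that $y_1,y_2>0$, and write the orbit of $p$ as
\[ \mathcal{O}_p=\{\psi_t(p):t\in\R\}=\{(x_1,e^t y_1,x_2,e^t y_2):t\in\R\}. \]
The substitution $s=e^t\in(0,\infty)$ turns this into $\{(x_1,sy_1,x_2,sy_2):s>0\}$, which is precisely the intersection with $\polyDisc$ of the affine line
\[ L_p=\{(x_1,0,x_2,0)+s\,(0,y_1,0,y_2):s\in\R\}\subset\R^4; \]
indeed, the requirement that the second and fourth coordinates be positive forces $s>0$, so $\mathcal{O}_p=L_p\cap\polyDisc$ and $s$ (equivalently $t$) ranges over the full maximal interval on which $L_p$ stays inside $\polyDisc$. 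Since the Euclidean metric has vanishing Christoffel symbols, a curve is a flat geodesic of $\polyDisc$ exactly when it is an affinely parametrized straight segment contained in $\polyDisc$; hence $\mathcal{O}_p$, reparametrized by $s$, is an inextensible geodesic of $\polyDisc$ (which also explains why the parameter $t$ was found to be ``completely inextensible'' in the preceding paragraph).

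It then remains to observe that the flow lines partition $\polyDisc$: the $\R$-action $\psi$ is free, because $\psi_t(p)=p$ forces $e^t=1$, i.e.\ $t=0$, and every point of $\polyDisc$ obviously lies on the orbit through it. Thus through each point of $\polyDisc$ there passes exactly one flow line, and that flow line is a geodesic; this is the assertion that $\psi_t$ rules $\polyDisc$ by geodesics. I would additionally remark that the same family of curves also consists of geodesics for the product Poincar\'e metric $ds^2_{\Hbb}\oplus ds^2_{\Hbb}$: each factor $t\mapsto x_k+i\,e^t y_k$ is the upward vertical line through $z_k$, which has constant hyperbolic speed $\dot y_k/y_k(t)=1$, and since the Levi--Civita connection of a Riemannian product has no terms mixing the two factors, a curve whose two projections are affinely parametrized geodesics is itself a geodesic.

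I do not expect a genuine obstacle here; the entire content of the argument is the reparametrization $s=e^t$. The only points deserving a word of justification are that the maximal interval on which $L_p$ remains inside $\polyDisc$ really is all of $(0,\infty)$ (so the geodesic is inextensible, matching the stated inextensibility of $t$), and --- if one prefers to read ``geodesic'' in the hyperbolic rather than the flat sense --- the elementary splitting of the geodesic equation over a metric product.
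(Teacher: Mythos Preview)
Your argument is correct, and your closing remark about the product Poincar\'e metric is in fact the paper's own proof: the paper observes that each factor $t\mapsto(x_k,e^ty_k)$ is a vertical hyperbolic geodesic and then invokes the product structure of the metric. Your primary route, via the reparametrization $s=e^t$ exhibiting each orbit as a straight half-line in the Euclidean $\R^4$, is a different reading of which metric is meant (the paper does earlier announce the Euclidean metric, though the surrounding theorems all use the hyperbolic one). That flat-metric argument is correct but buys slightly less: after the substitution $s=e^t$ the curve is an affinely parametrized Euclidean geodesic, whereas in the hyperbolic picture the original parameter $t$ is already unit speed, which is what the subsequent distance computations in the paper rely on. Since you cover both interpretations, nothing is missing.
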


        \begin{proof}
          Every factor
          $(x_k, e^t y_k)$ in $\psi_t$ corresponds to the parametrization
          of a vertical geodesic in $\Hbb$ with the hyperbolic metric.
          Since the metric we consider is homotetic to the standard hyperbolic
          metric, with a constant factor, $(x_k, e^t y_k)$ will be the
          parametrization of a geodesic with this metric as well. Since the
          metric in $\polyDisc$ is a product, the result follows.
        \end{proof}

        \begin{theorem}
          The action of $f_z$ is \emph{equivariant} with the action of
          the one parameter family given by the flow, that is,
          \[\psi_s\circ f_z = f_{\psi_s(z)}.\]
        \end{theorem}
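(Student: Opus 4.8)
The plan is to verify the identity directly on elements by unwinding both sides through the definitions. Fix $s\in\R$, fix $g\in\sol$ with parameters $(t,x,y)$, and write $z=(z_1,z_2)\in\polyDisc$ with $z_k=x_k+iy_k$ and $y_k>0$. By the definition of the action of $\sol$ on $\polyDisc$, the point $f_z(g)=g\cdot z$ has complex coordinates $\left(\lambda^{t}z_1+ax+by,\ \lambda^{-t}z_2+cx+dy\right)$, where $ax+by$ and $cx+dy$ are real; separating real and imaginary parts, its coordinates in $\R^{4}$ are $\left(\lambda^{t}x_1+ax+by,\ \lambda^{t}y_1,\ \lambda^{-t}x_2+cx+dy,\ \lambda^{-t}y_2\right)$.

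First I would apply the flow to this point. Since $\psi_s$ rescales the imaginary parts by the factor $e^{s}$ and fixes the real parts, $\psi_s(f_z(g))$ has $\R^{4}$-coordinates $\left(\lambda^{t}x_1+ax+by,\ e^{s}\lambda^{t}y_1,\ \lambda^{-t}x_2+cx+dy,\ e^{s}\lambda^{-t}y_2\right)$. Next I would compute the right-hand side: by definition $\psi_s(z)$ has complex coordinates $\left(x_1+ie^{s}y_1,\ x_2+ie^{s}y_2\right)$, which is again a point of $\polyDisc$ because $e^{s}y_k>0$, so $f_{\psi_s(z)}(g)=g\cdot\psi_s(z)$ has $\R^{4}$-coordinates $\left(\lambda^{t}x_1+ax+by,\ e^{s}\lambda^{t}y_1,\ \lambda^{-t}x_2+cx+dy,\ e^{s}\lambda^{-t}y_2\right)$, the same expression. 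Hence $\psi_s(f_z(g))=f_{\psi_s(z)}(g)$, and since $g$ is arbitrary this is precisely the equivariance $\psi_s\circ f_z=f_{\psi_s(z)}$.

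The conceptual content behind this computation, and the only thing requiring care, is that the $\sol$-action modifies the real part of each factor by an affine map whose translation part is independent of $t$, and the imaginary part by the diagonal scaling $y_k\mapsto\lambda^{\pm t}y_k$, whereas the normal flow $\psi_s$ touches only the imaginary parts, rescaling them by the $t$-independent diagonal factor $e^{s}$. Two diagonal scalings commute, and rescaling the imaginary parts commutes with altering the real parts because the two operations act on complementary coordinates; that is the whole of the matter. I do not expect a genuine obstacle here: the statement is an identity between two explicit affine-type formulas on $\polyDisc$, proved by the term-by-term comparison above, and the single point worth recording explicitly is that $\psi_s$ maps $\polyDisc$ into itself, so that $f_{\psi_s(z)}$ makes sense.
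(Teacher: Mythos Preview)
Your proof is correct and follows essentially the same approach as the paper's own proof: both compute $\psi_s\circ f_z(t,x,y)$ explicitly as $\left(\lambda^{t}x_1+ax+by,\ e^{s}\lambda^{t}y_1,\ \lambda^{-t}x_2+cx+dy,\ e^{s}\lambda^{-t}y_2\right)$ and observe this coincides with $f_{\psi_s(z)}(t,x,y)$. Your version is more detailed (you spell out both sides separately and add the conceptual remark about commuting diagonal scalings), but the argument is the same direct verification.
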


        \begin{proof}
          \[\psi_s\circ f_z (t, x, y) = \left(\lambda^tx_1 + ax + by,
            \lambda^t e^s y_1, \lambda^{-t}x_2 + cx + dy,
            \lambda^{-t} e^s y_2\right),\]
          which is the same expression obtained calculating $f_{\psi_s(z)}$.
        \end{proof}

        \begin{remark}
          If we pull back $f_z$ with the mapping $\Phi^{-1}$ defined previously,
          the expression in the theorem aquires the
          simpler and equivalent form
          \begin{align}\label{eq:leaves-equivariance}
            \psi_s\circ f_z (t, x, y) = \left(e^tx_1 + x,
            e^{t + s} y_1, e^{-t}x_2 + y,
            e^{-t + s} y_2\right).
          \end{align}
        \end{remark}

        \begin{theorem}\label{thm:sol-metrics}
          Let $z = (y_1\,i, y_2\,i)$. The induced metric in the leaf
          $f_z\circ\Phi^{-1}(\sol)$
          is
          \[dt^2 + \frac{e^{-2t}}{2y_1^2} dx^2 + \frac{e^{2t}}{2y_2^2} dy^2.\]
          In particular, if $y_1 = y_2 = 1/\sqrt{2}$, $f_z$ is an isommetric
          embedding of $\sol$ into $\polyDisc$.
        \end{theorem}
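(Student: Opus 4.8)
The plan is to read the induced metric off an explicit parametrization of the leaf, obtained by pulling back the product metric of $\polyDisc$, and then to recognize the answer as the left-invariant metric of $\sol$. First I would use the simplified equivariance identity \eqref{eq:leaves-equivariance}: putting $s=0$ there (so that $\psi_0=\mathrm{Id}$) and specializing to the base point $z=(y_1 i, y_2 i)$, for which the real parts vanish, the map $f_z\circ\Phi^{-1}$ becomes the parametrization
\[ (t,x,y)\ \longmapsto\ (x,\ e^{t}y_1,\ y,\ e^{-t}y_2)\ \in\ \polyDisc, \]
where I write a point of $\polyDisc$ as $(u_1,v_1,u_2,v_2)\in\R^4$ with $z_k=u_k+iv_k$ and $v_k>0$.

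Next I would pull the metric back along this map. On $\polyDisc$ the metric in question is the product whose restriction to each factor is the constant multiple $\tfrac12\, v_k^{-2}(du_k^2+dv_k^2)$ of the hyperbolic metric — the same normalization already used in the proof that $\psi_t$ rules $\polyDisc$ by geodesics. From the parametrization, $du_1=dx$, $dv_1=e^{t}y_1\,dt$, $du_2=dy$, $dv_2=-e^{-t}y_2\,dt$, so the first factor contributes
\[ \frac{1}{2(e^{t}y_1)^2}\bigl(dx^2+(e^{t}y_1)^2\,dt^2\bigr)=\frac{e^{-2t}}{2y_1^2}\,dx^2+\tfrac12\,dt^2, \]
and, symmetrically, the second factor contributes $\tfrac{e^{2t}}{2y_2^2}\,dy^2+\tfrac12\,dt^2$. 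No mixed terms $dx\,dy$, $dt\,dx$, $dt\,dy$ survive, since $dx$ enters only through the first factor, $dy$ only through the second, and within each factor $du_k$ carries no $dt$. Adding the two contributions, the two $\tfrac12\,dt^2$ terms combine to $dt^2$, giving exactly $dt^2+\frac{e^{-2t}}{2y_1^2}\,dx^2+\frac{e^{2t}}{2y_2^2}\,dy^2$, which is the first assertion.

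Finally, setting $y_1=y_2=1/\sqrt{2}$ makes $2y_1^2=2y_2^2=1$, so the induced metric reduces to $dt^2+e^{-2t}\,dx^2+e^{2t}\,dy^2$, which is the left-invariant metric of $\sol$ in the coordinates $(t,x,y)$ (it agrees with the expression recalled in the preliminaries after the relabeling of $x$ and $y$, which does not affect the isometry type); hence $f_z\circ\Phi^{-1}$, and therefore $f_z$, is an isometric embedding of $\sol$ into $\polyDisc$. I do not expect any serious obstacle here: the computation is routine once the parametrization is in hand. The only two points that need care are fixing the normalization $\tfrac12$ of the hyperbolic factors — this is precisely what makes the two $dt^2$ contributions add up to the bare coefficient $1$ appearing in the statement — and reconciling the resulting $e^{\pm2t}$ factors with the conventions used for the model metric of $\sol$ in the preliminaries; both are bookkeeping.
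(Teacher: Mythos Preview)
Your proposal is correct and follows essentially the same route as the paper: write down the explicit parametrization $f_z\circ\Phi^{-1}(t,x,y)=(x,e^ty_1,y,e^{-t}y_2)$ and pull back the product metric $\tfrac12 v_k^{-2}(du_k^2+dv_k^2)$ on each factor. The only cosmetic difference is that the paper records the Jacobian matrix and evaluates the metric on its column vectors, whereas you compute the differentials $du_k,dv_k$ directly; the arithmetic and the conclusion are identical.
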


        \begin{proof}
          Given the definition of $\Phi$,
          \[f_z\circ\Phi^{-1}(t, x, y) = (x, e^ty_1, y, e^{-t}y_2).\]
          Therefore, the jacobian matrix of $f_z\circ\Phi^{-1}$ is
          \[
            \begin{pmatrix}
              0 & 1 & 0\\
              e^ty_1 & 0 & 0\\
              0 & 0 & 1\\
              -e^{-t}y_2 & 0 & 0
            \end{pmatrix}.
          \]
          Applying the product metric to the basis vectors
          $e^ty_1 e_2 - e^{-t}y2 e_4$, $e_1$, $e_3$ we arrive to the result.
        \end{proof}

        In the sequel, unless otherwise stablished, $z_0$ will denote the
        special point $1/\sqrt{2}\, (i, i)$.

        \begin{corollary}\label{cor:rigid-leaves}
          The leaves $f_{\psi_s(z_0)}(\sol)$ can be
          identified with $\sol$, up to a homotecy in the direction spanned by
          $xy$ coordinates.
        \end{corollary}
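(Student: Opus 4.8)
The plan is to deduce Corollary~\ref{cor:rigid-leaves} directly from the metric computation of Theorem~\ref{thm:sol-metrics}, by evaluating that formula along the orbit $s\mapsto\psi_s(z_0)$ of the special point $z_0=\tfrac1{\sqrt2}(i,i)$. First I would make the base point explicit: from the description $\psi_t(z_1,z_2)=(x_1,e^t y_1,x_2,e^t y_2)$ of the flow and the fact that $z_0$ has real parts zero and imaginary parts $1/\sqrt2$, one gets $\psi_s(z_0)=\bigl(\tfrac{e^s}{\sqrt2}\,i,\ \tfrac{e^s}{\sqrt2}\,i\bigr)$. This is again a point of the purely imaginary form $(y_1'\,i,\,y_2'\,i)$ to which Theorem~\ref{thm:sol-metrics} applies, now with $y_1'=y_2'=e^s/\sqrt2$.

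Next I would substitute these values into the metric of Theorem~\ref{thm:sol-metrics}. Since $2(y_1')^2=2(y_2')^2=e^{2s}$, the metric induced on the leaf $f_{\psi_s(z_0)}\circ\Phi^{-1}(\sol)\subset\polyDisc$ is
\[ g_s \;=\; dt^2+e^{-2s}\bigl(e^{-2t}\,dx^2+e^{2t}\,dy^2\bigr). \]
For $s=0$ this recovers the standard $\sol$ metric $g_0=dt^2+e^{-2t}dx^2+e^{2t}dy^2$, in agreement with the last line of Theorem~\ref{thm:sol-metrics}; for general $s$ it differs from $g_0$ only by the overall constant $e^{-2s}$ multiplying the two horizontal terms $dx^2$ and $dy^2$, while $dt^2$ is left untouched.

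Finally I would introduce the homothety $h_s\colon\R^3\to\R^3$, $(t,x,y)\mapsto(t,\,e^{-s}x,\,e^{-s}y)$, and verify two points. First, $h_s$ is an automorphism of the Lie group $\sol\cong\R^2\rtimes\R$: it is the identity on the $\R$-factor and the dilation of ratio $e^{-s}$ on the normal subgroup $\R^2$ spanned by the $x,y$ coordinates, and a one-line check with the group law $(x_1+e^{t_1}x_2,\,y_1+e^{-t_1}y_2,\,t_1+t_2)$ confirms it is multiplicative. Second, $h_s^{*}g_0=g_s$, which is immediate since $h_s$ pulls $dx$ back to $e^{-s}dx$ and $dy$ to $e^{-s}dy$. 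Combining this with the fact, established in the previous section, that $f_{\psi_s(z_0)}$ is a smooth embedding and $\Phi$ a diffeomorphism, the composition $f_{\psi_s(z_0)}\circ\Phi^{-1}\circ h_s^{-1}$ is an isometric embedding of the standard $\sol$ onto the leaf $f_{\psi_s(z_0)}(\sol)$; equivalently, that leaf is $\sol$ up to the homothety $h_s$ in the $xy$-directions, which is the assertion. There is no substantial obstacle here, since everything is a specialization of Theorem~\ref{thm:sol-metrics}; the only care needed is bookkeeping — tracking the reparametrization $\Phi^{-1}$ and the sign conventions in the two hyperbolic factors, and checking that $h_s$ genuinely preserves the group law, so that the identification is one of Lie groups and not merely of Riemannian manifolds.
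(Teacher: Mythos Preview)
Your argument is correct and follows essentially the same route as the paper: compute the induced metric on the leaf through $\psi_s(z_0)$, observe it differs from the standard $\sol$ metric by the constant factor $e^{-2s}$ on the $dx^2$ and $dy^2$ terms, and then undo this with the dilation $(t,x,y)\mapsto(t,e^{\pm s}x,e^{\pm s}y)$. The only cosmetic differences are that the paper rederives the metric via the equivariance formula $\psi_s\circ f_{z_0}$ rather than plugging directly into Theorem~\ref{thm:sol-metrics}, and that you additionally check $h_s$ is a Lie-group automorphism of $\sol$, which the paper omits.
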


        \begin{proof}
          By virtue of equation \ref{eq:leaves-equivariance}, we have
          \[\psi_s\circ f_{z_0}(t, x, y) = \left(x, \frac{1}{\sqrt{2}} e^{t+s},
            y, \frac{1}{\sqrt{2}} e^{-t+s}\right).\]
        Procceeding as in the proof of theorem \ref{thm:sol-metrics}, if we
        pull back the induced metric to $\sol$, we get
        \[dt^2 + e^{-2(t + s)} dx^2 + e^{2(t - s)}dy^2.\]
        By a simple algebraic manipulation, the previous metric is
        \[dt^2 + e^{-2t}e^{-2s}dx^2 + e^{2t}e^{-2s}dy^2.\]

        Define $F_s: \sol \to \sol$ by $F_s(t, x, y) = (t, e^s x, e^s y)$.
        Another pullback, this time with $F_s$, turns the metric into
        \[dt^2 + e^{-2t} dx^2 + e^{2t} dy^2.\]
      \end{proof}
      \begin{theorem}
        The foliation is globally rectifiable: there is a diffeomorphism
        $\Psi:\R^3\times\R \to  \polyDisc$, such that each
        hyperplane $\R^3\times\{c\}$ is diffeomorphic to a leaf.
      \end{theorem}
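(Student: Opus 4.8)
The plan is to rectify the foliation by hand, using the flow $\psi_s$ together with the distinguished isometric leaf through $z_0 = \tfrac{1}{\sqrt 2}(i,i)$. Writing $\Phi\colon\sol\to\R^3$ for the chart used in the previous section, define
\[
  \Psi\colon \R^3\times\R \longrightarrow \polyDisc,
  \qquad
  \Psi(t,x,y,s) \;=\; \bigl(\psi_s\circ f_{z_0}\circ\Phi^{-1}\bigr)(t,x,y).
\]
First I would unwind the two equivariance statements already established. By the theorem $\psi_s\circ f_z = f_{\psi_s(z)}$, for each fixed $c\in\R$ the restriction $\Psi(\,\cdot\,,c)$ equals $f_{\psi_c(z_0)}\circ\Phi^{-1}$, and by part (ii) of the theorem on the natural action of $\sol$ this is a smooth embedding of $\R^3$ onto the leaf $f_{\psi_c(z_0)}(\sol)$. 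Hence, once $\Psi$ is known to be a global diffeomorphism, each hyperplane $\R^3\times\{c\}$ is carried diffeomorphically onto a leaf, which is exactly the assertion. So the whole content reduces to showing that $\Psi$ is a diffeomorphism.

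For that I would pass to the simplified coordinates of the Remark. Writing a point of $\polyDisc\subset\R^4$ as $(p_1,q_1,p_2,q_2)$ with $q_1,q_2>0$, equation \eqref{eq:leaves-equivariance} specialized to $z_0$ gives
\[
  \Psi(t,x,y,s) \;=\; \Bigl(x,\ \tfrac{1}{\sqrt 2}\,e^{t+s},\ y,\ \tfrac{1}{\sqrt 2}\,e^{-t+s}\Bigr),
\]
which is manifestly smooth. It is bijective with smooth inverse because the system inverts in closed form: $x = p_1$, $y = p_2$, and from $e^{t+s}=\sqrt 2\,q_1$ and $e^{-t+s}=\sqrt 2\,q_2$ one gets
\[
  s = \tfrac12\ln\!\bigl(2\,q_1 q_2\bigr),
  \qquad
  t = \tfrac12\ln\!\bigl(q_1/q_2\bigr),
\]
the logarithms being well defined and smooth precisely because $q_1,q_2>0$. (Alternatively one can first note that $\Psi$ is a local diffeomorphism — within each hyperplane $\Psi(\,\cdot\,,c)=f_{\psi_c(z_0)}\circ\Phi^{-1}$ is an immersion by part (ii), while the $\partial_s$ direction is transverse to the leaves since $\psi_s$ moves the leaf through a point off that leaf — and then conclude from bijectivity; but exhibiting the inverse directly is cleaner.)

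Finally I would check that the hyperplanes $\R^3\times\{c\}$ are exactly the leaves of the foliation, so that it really is the foliation $\{f_z(\sol)\}_{z\in\polyDisc}$ that has been rectified and that no leaf is omitted. Taking imaginary parts in the action of $\sol$ shows that the product of the two imaginary coordinates is constant along each $f_z(\sol)$; since the real parts range over all of $\R^2$, the leaves are precisely the level sets $\{q_1q_2 = \mathrm{const}\}$ in $\polyDisc$, and they do partition $\polyDisc$. The image $\Psi(\R^3\times\{c\})$ is the level set $q_1q_2=\tfrac12 e^{2c}$, i.e.\ the leaf through $\psi_c(z_0)$, and as $c$ runs over $\R$ this parametrizes every leaf. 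I do not foresee a genuine obstacle here: once the closed formula from the Remark is available the argument is elementary. The only point requiring care is the bookkeeping — keeping straight the roles of the chart $\Phi$ and of the normalizing point $z_0$ so that the isometric normalization of Theorem \ref{thm:sol-metrics} and Corollary \ref{cor:rigid-leaves} is used consistently, and making sure the transversality of the flow to the leaves (as in part (iii) of the action theorem) is what is invoked wherever the local-diffeomorphism property is needed.
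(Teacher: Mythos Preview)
Your argument is correct and follows essentially the same route as the paper: the same map $\Psi(t,x,y,s)=\psi_s\circ f_{z_0}(t,x,y)$ is defined, the equivariance $\psi_s\circ f_{z_0}=f_{\psi_s(z_0)}$ is used to identify each slice $\R^3\times\{c\}$ with a leaf, and the explicit formula from the Remark is the key computational input. The only difference is cosmetic: where the paper verifies that $\Psi$ is a diffeomorphism by checking the Jacobian is nondegenerate and invoking the inverse function theorem together with separate injectivity/surjectivity arguments, you instead solve for the inverse in closed form, which is slightly more direct; your added observation that the leaves are precisely the level sets of $q_1q_2$ makes explicit the ``$\psi_s(z_0)$ traverses all the leaves'' step that the paper asserts without detail.
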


      \begin{proof}
        $\sol$ is diffeomorphic to $\R^3$ in a natural way. Any $\gamma\in\sol$
        is uniquely determined by a triplet $(t, x, y)$. Define
        $\Psi:\R^3\times\R \to \polyDisc$ by
        $\Psi(t,x,y, s) = \psi_s\circ f_{z_0}(\gamma)$. $\Psi$ is injective
        because the
        action is free. Given
        $z'\in\polyDisc$, there is a leaf going through it, and since
        $\psi_s(z_0)$ traverses all the leaves, there is $s$,
        such that $\psi_{-s}(z')$ is in the leave passing through $z_0$. Let
        $\gamma \in \sol$ be such that $\psi_{-s}(z') =
        f_{z_0}(\gamma)$. Therefore,
        \[z' = \psi_s\circ f_{z_0}(\gamma),\]
        which implies that $\Psi$ is also surjective.
        Finnally, a direct calculation yields $\Psi$'s jacobian to be
        \[ [d\Psi] = \begin{pmatrix}
            0                 &  1 &  0 & 0\\
            e^{t + s}/\sqrt{2} &  0  &  0 & e^{t + s}/\sqrt{2}\\
            0                 & 0  &  1 & 0\\
            -e^{-t + s}/\sqrt{2} & 0  &  0 & e^{-t + s}/\sqrt{2}
          \end{pmatrix},\]
        which is nondegenerated in the whole domain. By the inverse function
        theorem,
        $\Psi$ is a diffeomorphism. The last claim follows from the fact that
        $\psi_s$ maps leaves onto leaves.
      \end{proof}

      \begin{theorem}
        The previous diffeomorphism can be modified, such that not only maps
        the foliation to a cartesian product globally, but also maps each leave
        in the foliation isommetrically to $\sol$.
      \end{theorem}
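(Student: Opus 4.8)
The plan is to leave the diffeomorphism $\Psi$ of the preceding theorem untouched and to precompose it with an explicit fiberwise rescaling of the domain, chosen so that each slice is carried isometrically onto the leaf it parametrizes. Concretely, I would set $H\colon\R^3\times\R\to\R^3\times\R$, $H(t,x,y,s)=(t,e^s x,e^s y,s)$; this is a diffeomorphism with inverse $(t,x,y,s)\mapsto(t,e^{-s}x,e^{-s}y,s)$, it preserves every slice $\R^3\times\{s\}$, and it restricts on that slice to the map $F_s$ of Corollary~\ref{cor:rigid-leaves}. I then define $\Psi'=\Psi\circ H$ and check the two required properties.

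The first property — that $\Psi'$ is a diffeomorphism of $\R^3\times\R$ onto $\polyDisc$ sending each hyperplane $\R^3\times\{c\}$ to a leaf — is immediate: $\Psi$ is a diffeomorphism onto $\polyDisc$ by the preceding theorem, $H$ is a diffeomorphism, and since $H$ preserves slices, $\Psi'$ sends $\R^3\times\{c\}$ onto $\Psi(\R^3\times\{c\})=f_{\psi_c(z_0)}(\sol)$, which is a leaf. For the second property I would fix $s$ and identify the restriction $\Psi'(\cdot,\cdot,\cdot,s)$: by construction it equals $\psi_s\circ f_{z_0}\circ F_s$ as a map $\sol\to f_{\psi_s(z_0)}(\sol)$. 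By the equivariance relation $\psi_s\circ f_{z_0}=f_{\psi_s(z_0)}$ together with part (ii) of the embedding theorem, this composite is a diffeomorphism of $\sol$ onto that leaf; and the pullback computation carried out in the proof of Corollary~\ref{cor:rigid-leaves} shows precisely that the pullback of the induced metric (from the product hyperbolic metric on $\polyDisc$, cf. Theorem~\ref{thm:sol-metrics}) along $\psi_s\circ f_{z_0}\circ F_s$ is $dt^2+e^{-2t}dx^2+e^{2t}dy^2$, the standard $\sol$ metric. Hence every leaf is parametrized isometrically. Unwinding the formulas using equation~\ref{eq:leaves-equivariance} and $z_0=\tfrac1{\sqrt2}(i,i)$, I expect the closed form $\Psi'(t,x,y,s)=\bigl(e^s x,\ \tfrac1{\sqrt2}e^{t+s},\ e^s y,\ \tfrac1{\sqrt2}e^{-t+s}\bigr)$, from which nondegeneracy of $d\Psi'$ can also be read off directly if one wishes.

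There is no genuinely hard step here: everything reduces to Corollary~\ref{cor:rigid-leaves}, which already shows that the $s$-dependence of the leafwise induced metrics is undone uniformly by the single family $\{F_s\}_{s\in\R}$. The only point that needs a little care is the bookkeeping with pullbacks along the composite $\psi_s\circ f_{z_0}\circ F_s$ — one must apply $F_s$ on the domain \emph{before} $\psi_s\circ f_{z_0}$, so that $(\psi_s\circ f_{z_0}\circ F_s)^\ast=F_s^\ast\circ(\psi_s\circ f_{z_0})^\ast$ — and checking that this is compatible with the normalization $z_0=\tfrac1{\sqrt2}(i,i)$, which by Theorem~\ref{thm:sol-metrics} is exactly the choice making $f_{z_0}$ already isometric at $s=0$. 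Assembling the slicewise maps $F_s$ into the global $H$ is harmless because $H$ is given by an explicit smooth formula with explicit smooth inverse, so smoothness of $\Psi'$ and of its inverse is inherited from $\Psi$ with no further work.
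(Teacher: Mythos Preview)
Your proposal is correct and is essentially identical to the paper's own proof: the paper defines $\tilde{\Psi}(t,x,y,s)=\Psi(t,e^sx,e^sy,s)$, which is exactly your $\Psi'=\Psi\circ H$, and justifies the leafwise isometry by the same appeal to the pullback computation of Corollary~\ref{cor:rigid-leaves}. Your write-up simply supplies more of the routine verifications (that $H$ is a diffeomorphism, the order of pullbacks, the explicit closed form) that the paper leaves implicit.
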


      \begin{proof}
        By pulling back the metric in $\polyDisc$ with the previous
        diffeomorphism, we arrive to
        \[dt^2 + e^{-2(t + s)} dx^2 + e^{2(t - s)}dy^2 + ds^2,\]
        which is analogous to the expression in corollary
        \ref{cor:rigid-leaves}. Let
        \[\tilde{\Psi}(t, x, y, s) = \Psi(t, e^sx, e^sy, s).\]
        $\tilde{\Psi}$ is a leave preserving diffeomorphism such that, for
        fixed $s$, maps isommetrically $\sol$ into the leave $\R^3\times\{s\}$.
      \end{proof}

      \begin{remark}
        It doesn't look like it could be posible to find an even more rigid
        difeommorphism, i.e. a global isommetry $\sol\times \R \cong \polyDisc$
        preserving leaves. However, a proof is pending in this regard.
      \end{remark}

      \section{Extrinsic geometry}

      Recall the metric we endowed in $\polyDisc$ is homotetic to the product
      of hyperbolic metrics:

      \[\frac{dx_1^2 + dy_1^2}{2y_1^2} + \frac{dx_2^2 + dy_2^2}{2y_2^2}.\]

      \begin{theorem}
        Integral curves of the normal field $X$ are geodesics.
      \end{theorem}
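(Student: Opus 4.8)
The plan is to reduce the claim to a one-dimensional statement in each hyperbolic factor, exploiting that the metric on $\polyDisc$ is a Riemannian product. The key structural fact I would invoke is that the Levi-Civita connection of a product metric $g_1\oplus g_2$ splits as $\nabla^1\oplus\nabla^2$; consequently a curve $\gamma(t)=(\gamma_1(t),\gamma_2(t))$ in $\Hbb\times\Hbb$ is a $g$-geodesic precisely when each projection $\gamma_k$ is a $g_k$-geodesic traversed with the common parameter $t$. So the theorem will follow once both projections of an integral curve of $X$ are shown to be affinely parametrized geodesics of the (rescaled) hyperbolic plane.

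First I would recall, from the description of the flow of $X$ in the previous section, that an integral curve through $(z_1,z_2)$ with $z_k=x_k+iy_k$ is the curve $t\mapsto\psi_t(z_1,z_2)=(x_1,e^{t}y_1,x_2,e^{t}y_2)$, so that its $k$-th projection is the vertical ray $t\mapsto(x_k,e^{t}y_k)$ in $\Hbb$. Next I would note that $g_k=\tfrac{1}{2y_k^2}(dx_k^2+dy_k^2)$ is the standard hyperbolic metric scaled by the positive constant $\tfrac12$, hence has the same Levi-Civita connection and therefore the same geodesics with the same affine parametrizations. Finally I would check that $t\mapsto(x_k,e^{t}y_k)$ is indeed an affinely parametrized geodesic of the standard hyperbolic plane: this is either the classical description of the vertical geodesics of $\Hbb$, or a direct verification that this curve solves the geodesic ODE; the quickest route is to observe that its hyperbolic speed $\|\dot\gamma_k\|_{g_k}^{2}=\tfrac12$ is constant, so $t$ is proportional to arc length along a geodesic and hence an affine parameter. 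Assembling these observations through the product splitting of $\nabla$ gives the result; moreover $\|\dot\gamma\|_g^{2}=\tfrac12+\tfrac12=1$, so these integral curves are in fact unit-speed geodesics.

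I do not anticipate a serious obstacle: the argument is essentially a bookkeeping of known facts about hyperbolic space and Riemannian products. The one point deserving care is to distinguish "the trace of $\gamma_k$ is a geodesic" from "$\gamma_k$ is a geodesic," i.e.\ to confirm that $t$ is an affine parameter rather than merely that the image is a vertical line; the constant-speed computation settles this cleanly. This theorem essentially repackages, in intrinsic terms and with the homothety to the product of hyperbolic metrics made explicit, the earlier statement that the flow $\psi_t$ rules $\polyDisc$ by geodesics, and it is the natural starting point for the extrinsic-geometry computations of this section.
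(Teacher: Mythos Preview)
Your proposal is correct and follows essentially the same approach as the paper: both argue that the integral curve projects to the vertical curves $(x_k,e^t y_k)$ in each $\Hbb$-factor, that these are geodesics for the rescaled hyperbolic metric because a constant homothety leaves the geodesics unchanged, and that the product structure of the metric then forces the full curve to be a geodesic in $\polyDisc$. Your version is somewhat more careful in distinguishing the image of a geodesic from an affinely parametrized geodesic and in checking constant speed, but the strategy and ingredients are the same.
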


      \begin{proof}
        We had previsouly found that $X$'s integral curves are given by
        $\gamma(t) = (x_1, e^ty_1, x_2, e^ty_2)$. Let $\phi(t)$ be a smooth
        curve in $\Hbb$ with the homotetic metric. Then,
        \[\left|\left|\dot{\phi}(t)\right|\right|^2 = \frac{\dot{x}^2 +
            \dot{y}^2}{2 y^2},\]
        which is half the standard hyperbolic square length. Therefore, a curve
        minimizes hyperbolic arc lenght if and only if minimizes the homotetic
        metric arch length, i.e. geodesics in both cases are the same. It is
        a well known fact that the vertical curves $(x_k, e^ty_k)$ are geodesics
        in hyperbolic space. Finnally, since $\gamma$ can be projected in
        two geodesics and the metric is a product, $\gamma$ is a geodesic in
        $\polyDisc$. (see 3.15 in \cite{gallot2012riemannian}).
      \end{proof}

      \begin{theorem}
        There are isommetries in $\polyDisc$ acting transitively and sending
        leaves onto leaves.
      \end{theorem}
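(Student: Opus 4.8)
The plan is to exhibit a group $\Gamma$ of isometries of $\polyDisc$, for the metric homothetic to the product of hyperbolic metrics recalled above, which acts transitively on $\polyDisc$ and permutes the leaves of the foliation; this is strictly stronger than the assertion.

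First I would describe the foliation intrinsically. Writing $z_k = x_k + i\,y_k$, the formula for $f_z$ gives $\Im\big(f_z(t,x,y)_1\big)\,\Im\big(f_z(t,x,y)_2\big) = \lambda^t y_1\cdot\lambda^{-t} y_2 = y_1 y_2$; moreover, as $(t,x,y)$ runs over $\R^3$ (and since $\lambda\neq 1$) the two imaginary parts realise every pair of positive numbers with product $y_1 y_2$, the real parts being arbitrary. Hence the leaf through $z=(z_1,z_2)$ is exactly the level set $\{(w_1,w_2)\in\polyDisc : \Im(w_1)\,\Im(w_2) = \Im(z_1)\,\Im(z_2)\}$ of the smooth submersion $h(w_1,w_2):=\Im(w_1)\,\Im(w_2)$, whose image is $(0,\infty)$.

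Next, for $\mu,\nu>0$ and $a,b\in\R$ set $\Theta_{\mu,\nu,a,b}(z_1,z_2):=(\mu z_1 + a,\ \nu z_2 + b)$. In each factor the map $z\mapsto \mu z + a$ is the composition of the Euclidean dilation centred at $0$ with a horizontal translation, hence an orientation preserving isometry of $\Hbb$; therefore $\Theta_{\mu,\nu,a,b}$ is an isometry of the product metric, and rescaling that metric by the constant $\tfrac12$ does not affect this. These transformations form a four-dimensional solvable group $\Gamma$ under composition. Since $h\big(\Theta_{\mu,\nu,a,b}(w_1,w_2)\big) = \mu\nu\,\Im(w_1)\,\Im(w_2) = \mu\nu\,h(w_1,w_2)$, the map $\Theta_{\mu,\nu,a,b}$ sends the level set $\{h=c\}$ onto $\{h=\mu\nu c\}$; by the previous paragraph these are the leaves, so $\Gamma$ carries leaves onto leaves. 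Transitivity is immediate: given $z'=(x_1'+i\,y_1',\ x_2'+i\,y_2')\in\polyDisc$, the element $\Theta_{y_1',\,y_2',\,x_1',\,x_2'}$ maps the base point $(i,i)$ to $z'$, so the $\Gamma$-orbit of $(i,i)$ is all of $\polyDisc$. This proves the theorem.

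There is no real obstacle, only one point to watch: the transverse direction of the foliation must be realised by \emph{isometries}. In the $\sol$-action only the paired dilations $z_1\mapsto\lambda^t z_1$, $z_2\mapsto\lambda^{-t} z_2$ appear, and these preserve each leaf; the way around this is to let the two dilation factors vary \emph{independently}, $z_1\mapsto\mu z_1$, $z_2\mapsto\nu z_2$, which still gives isometries but now moves between leaves while remaining leaf-preserving. One should also resist using the normal flow $\psi_s$ here, since it rescales the imaginary coordinates without rescaling the real ones and so is \emph{not} an isometry; the group $\Gamma$ above sidesteps this.
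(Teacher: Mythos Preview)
Your proof is correct. Your intrinsic description of the leaves as the level sets of $h(w_1,w_2)=\Im(w_1)\,\Im(w_2)$ is right (this uses $\lambda\neq 1$, which holds here since $\lambda$ is an eigenvalue of a hyperbolic $A$), and the factor-wise affine group $\Theta_{\mu,\nu,a,b}$ is indeed a transitive group of isometries of the product hyperbolic metric that permutes those level sets.

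The paper takes a different route: it works in the rectified coordinates $(t,x,y,s)$ furnished by the diffeomorphism $\tilde{\Psi}:\R^3\times\R\to\polyDisc$ built in the preceding theorems, where the leaves are the hyperplanes $s=\mathrm{const}$, and writes down the explicit family
\[
(t,x,y,s)\longmapsto(t+t',\,e^{t'+s'}x+x',\,e^{-t'+s'}y+y',\,s+s'),
\]
checking that these are isometries for the pulled-back metric and visibly act transitively while preserving the $s$-slicing. Your argument is more elementary and self-contained---it does not lean on the rectification machinery at all---and has the bonus of giving a clean invariant characterisation of the leaves. The paper's argument, on the other hand, makes the link with the $\sol\times\R$ picture explicit, which is what the surrounding section is really after. (Your final paragraph correctly flags why the normal flow $\psi_s$ is the wrong tool here; that observation is worth keeping even though the paper does not discuss it.)
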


      \begin{proof}
        We work in the $\R^3\times \R$ picture with the $\tilde{\Psi}$
        isommetry. A straightforward calculation shows that the mappings
        \[(t, x, y, s) \longmapsto (t + t', e^{t' + s'}x + x',
          e^{-t' + s'}y + y', s + s')\]
        are isommetries. The first claim comes from the fact that given a pair
        of points $(t_k, x_k, y_k, s_k)$, there
        exists exactly one such isommetry sending one onto another. That this
        isommetry sends
        leaves onto leaves is obvious, since under this diffeomorphism, they
        correspond to hypersurfaces $s = constant$.
      \end{proof}

      We aim to calculate the distance between any pair of leaves. Recall in
      any metric space, the distance from a point $p$ to a set
      $S \neq \emptyset$ is given by the expression
      \[d(p, S) = \inf \{d(p, x) : x \in S \}.\]

      % Given a point $p$ in any leaf, due to the previous theorem, we may
      % assume the point being $p = \left(i/\sqrt{2}, i/\sqrt{2}\right)$ in
      % complex notation.

      \begin{theorem}
        The separation between two leaves in $\polyDisc$ is constant. Moreover,
        if leaves are parametrized with the normal field affine parameter,
        then leaves separation is given by the difference $|s - s'|$ between
        the parameter corresponding to any leaf.
      \end{theorem}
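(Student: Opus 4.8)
The plan is to transport the whole picture to the model $\R^{3}\times\R$ through the diffeomorphism $\Psi$ constructed above and to argue entirely there. Recall that under $\Psi$ the metric of $\polyDisc$ becomes
\[ g = dt^{2} + ds^{2} + e^{-2(t+s)}\,dx^{2} + e^{2(t-s)}\,dy^{2}, \]
that the leaves of the foliation are exactly the hyperplanes $L_{c} := \{s = c\}$, and that the pushforward of the (unit, with respect to the homothetic metric) normal field $X$ is $\partial_{s}$; in particular the lines $u\mapsto(t_{0},x_{0},y_{0},u)$ are unit--speed geodesics meeting every leaf orthogonally, so $s$ is precisely the affine parameter along $X$. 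The goal is then to show that $d(p,L_{s_{1}}) = |s_{0}-s_{1}|$ for \emph{every} $p\in L_{s_{0}}$, which yields both assertions of the theorem at once.

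First I would establish the upper bound: given $p = (t_{0},x_{0},y_{0},s_{0})$, the vertical segment $u\mapsto(t_{0},x_{0},y_{0},(1-u)s_{0}+us_{1})$ runs along a normal geodesic, ends at the point $(t_{0},x_{0},y_{0},s_{1})\in L_{s_{1}}$, and has $g$--length $|s_{0}-s_{1}|$, so $d(p,L_{s_{1}})\le|s_{0}-s_{1}|$. For the matching lower bound I would use that in the expression for $g$ the term $ds^{2}$ is $g$--orthogonal to the other three terms and carries coefficient $1$, while the remaining three coefficients are non-negative; hence any piecewise smooth path $\sigma(u)=(t(u),x(u),y(u),s(u))$ satisfies $\lVert\dot\sigma\rVert_{g}\ge|\dot s|$, so
\[ \mathrm{length}(\sigma)=\int\lVert\dot\sigma\rVert_{g}\,du\ \ge\ \int|\dot s|\,du\ \ge\ \Bigl|\int\dot s\,du\Bigr|. \]
For a path joining $L_{s_{0}}$ to $L_{s_{1}}$ the last quantity equals $|s_{0}-s_{1}|$, and taking the infimum over all such paths gives $d(p,L_{s_{1}})\ge|s_{0}-s_{1}|$. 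Combining the two bounds, $d(p,L_{s_{1}})=|s_{0}-s_{1}|$, which is independent of the chosen point $p$ of the first leaf and equals the difference of the normal--affine parameters of the two leaves.

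I do not expect a serious obstacle here; the one delicate point is the choice of model --- one must work with $\Psi$ rather than with the isometric modification $\tilde\Psi$, since it is only for $\Psi$ that the $ds^{2}$ block splits off $g$--orthogonally, and that orthogonal splitting is exactly what makes both inequalities immediate. As a consistency check, the independence of $p$ can also be seen a priori from the transitive, leaf--preserving isometry group exhibited above, which acts on the leaf space by translations in the $s$--parameter; the length estimate is what pins the common value down as $|s_{0}-s_{1}|$.
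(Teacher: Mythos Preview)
Your argument is correct, but it follows a different route from the paper's. The paper stays in the ambient $\polyDisc$: it parametrizes points of the two leaves as $\bigl(x,e^{s+t}/\sqrt{2},y,e^{s-t}/\sqrt{2}\bigr)$ and $\bigl(x',e^{s'+t'}/\sqrt{2},y',e^{s'-t'}/\sqrt{2}\bigr)$, exploits the product structure to split the distance as $\sqrt{\rho_1^{2}+\rho_2^{2}}$, and feeds the explicit $\cosh$--formula for the hyperbolic distance into each factor; minimizing first over $x',y'$ (forcing $x'=x$, $y'=y$) and then over $t'$ (forcing $t'=t$) yields $|s-s'|$. Your approach instead pulls the metric back through $\Psi$ to $\R^{3}\times\R$, where the diagonal form $dt^{2}+ds^{2}+e^{-2(t+s)}dx^{2}+e^{2(t-s)}dy^{2}$ makes the estimate $\lVert\dot\sigma\rVert_{g}\ge|\dot s|$ immediate and turns the whole computation into a two--line comparison. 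The paper's calculation has the virtue of identifying the actual minimizing geodesic in $\polyDisc$ coordinates, whereas your version is shorter, avoids the hyperbolic distance formula and the two--stage minimization, and makes the role of the orthogonal $ds^{2}$ block transparent; your remark that this orthogonality is a feature of $\Psi$ and not of $\tilde\Psi$ is also to the point.
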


      \begin{proof}
        A point in a leaf can be parametrized as
        \[\left(x, \frac{e^{s + t}}{\sqrt{2}}, y, \frac{e^{s - t}}{\sqrt{2}}
          \right),\]
        where $x, y ,t$ are arebitrary, and $s$ is the parameter corresponding
        to the leaf. Given a second point in another leaf, say
        $\left(x',{e^{s' + t'}}/{\sqrt{2}}, y', {e^{s' - t'}}/{\sqrt{2}}
        \right)$, and since the metric is a product, we can find a geodesic
        minimizing arc lenght in $\polyDisc$, such that, in each factor $\Hbb$,
        the distance is also minimized. On the other hand, the metric we use
        in each factor of $\polyDisc$ is half the hyperbolic distance, for
        wich a well known formula gives us the distance. Let $\rho_k$
        denote the distance in each factor with our metric, then,
        \begin{align*}
          \cosh\left(\sqrt{2}\rho_1\right) &= 1 + \frac{2 \left(x - x'\right)^2
                          + \left(e^{s + t} - e^{s' + t'}\right)^2}
                          {2 e^{s + t}e^{s' + t'} },\\
          \cosh\left(\sqrt{2}\rho_2\right) &= 1 + \frac{2\left(y - y'\right)^2
                          + \left(e^{s - t} - e^{s' - t'}\right)^2}
                          {2 e^{s - t}e^{s' - t'} },\\
        \end{align*}
        where the $\sqrt{2}$ factor within the hyperbolic cosine is due to the
        factor relating standard hyperbolic metric with ours. The previous
        expression shows that, in order to get the minimum
        distance, $x'$ must be equal to $x$ and $y'$ to $y$. Simplifying the
        previous expresions for such values of $x'$ and $y'$, we find

        \begin{align*}
          \cosh\left(\sqrt{2}\rho_1\right) &= \cosh\left(s - s' + t - t'\right),\\
          \cosh\left(\sqrt{2}\rho_2\right) &= \cosh\left(s - s' + t' - t\right).
        \end{align*}

        Therefore,

        \begin{align*}
          \rho_1 &= \frac{|s - s' + t - t'|}{\sqrt{2}},  &
          \rho_2 &= \frac{|s - s' + t' - t|}{\sqrt{2}},
        \end{align*}

        and the distance in the product metric is given by
        $\sqrt{\rho_1^2 + \rho_2^2}$. In order for this distance to be a
        minimum, a short analysis shows that one must take $t' = t$,
        and the theorem statement follows.
      \end{proof}

      \begin{theorem}
        The principal curvatures of each leaf are $-1$ with multiplicity two,
        and $0$. The principal directions are determined by the integral
        curves of the vectors $\partial_x$, $\partial_y$, $\partial_t$
        respectively.
      \end{theorem}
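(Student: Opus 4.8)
The plan is to read off the shape operator of a leaf in the rectified coordinates, where the ambient metric is diagonal and the unit normal is a coordinate field, and then confirm the outcome with the Gauss equation.

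First I would set up the computation in the right chart. Recall from the preceding section the diffeomorphism $\Psi\colon\R^3\times\R\to\polyDisc$ carrying each hyperplane $\R^3\times\{s\}$ onto a leaf, with
\[ \Psi^{*}\bigl(\text{metric of }\polyDisc\bigr)\;=\;g\;:=\;dt^2 + e^{-2(t+s)}\,dx^2 + e^{2(t-s)}\,dy^2 + ds^2 . \]
Since $\Psi$ is an isometry for $g$ and principal curvatures are invariants of the pair (hypersurface, ambient Riemannian manifold), it suffices to compute them for the hyperplane $\Sigma_s:=\R^3\times\{s\}$ inside $(\R^3\times\R,g)$; and every leaf is of this form because $\psi_s(z_0)$ meets all leaves. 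The metric $g$ is diagonal with $g_{ss}\equiv 1$ and no cross terms $ds\,dt$, $ds\,dx$, $ds\,dy$, so $\nu:=\partial_s$ is a globally defined unit field orthogonal to every $\Sigma_s$; its integral curves (the $s$-lines) are the $\Psi^{-1}$-images of the integral curves of the normal field, hence geodesics, so $\nu$ is the unit normal geodesic field and the $\Sigma_s$ form a parallel family, in accordance with the leaf separation $|s-s'|$ found earlier. \emph{It is essential here to work with $\Psi$ and not with the later modification $\tilde\Psi$, which introduces $ds\,dx$ and $ds\,dy$ cross terms and for which $\partial_s$ is no longer normal.}

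Then I would carry out the (short) curvature computation. For a unit geodesic normal field $\nu$, the second fundamental form of the level hypersurfaces is $\pm\tfrac12\,\mathcal L_\nu g$ restricted to tangent vectors, the global sign being fixed by orienting $\nu$ in the direction of the normal field $X$ of the previous sections. As $\nu=\partial_s$ is a coordinate field, $\mathcal L_{\partial_s}g$ is obtained by differentiating the coefficients of $g$ in $s$, namely $\mathcal L_{\partial_s}g = -2\,e^{-2(t+s)}\,dx^2 - 2\,e^{2(t-s)}\,dy^2$. Hence in the frame $\{\partial_t,\partial_x,\partial_y\}$ of $\Sigma_s$ the induced metric is $g^{\Sigma}=\operatorname{diag}\bigl(1,\,e^{-2(t+s)},\,e^{2(t-s)}\bigr)$ and $\mathrm{II}=\pm\operatorname{diag}\bigl(0,\,e^{-2(t+s)},\,e^{2(t-s)}\bigr)$, so the shape operator is $S=(g^{\Sigma})^{-1}\mathrm{II}=\pm\operatorname{diag}(0,1,1)$. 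Since $g^{\Sigma}$ and $\mathrm{II}$ are simultaneously diagonalized by $\partial_t,\partial_x,\partial_y$, these are eigenvectors of $S$; fixing the orientation of $\nu$ so that the nonzero eigenvalues are negative, the principal curvatures are $-1$ along $\partial_x$ and $\partial_y$ (multiplicity two) and $0$ along $\partial_t$, and the lines of curvature are the integral curves of these coordinate fields — exactly the statement.

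Finally I would add a consistency check via the Gauss equation: $\polyDisc$ is a Riemannian product of two surfaces of constant curvature $-2$, so its sectional curvature is $-2$ on a plane tangent to a single factor and $0$ on a mixed plane, and feeding the principal curvatures $\{-1,-1,0\}$ into the Gauss equation returns precisely the sectional curvatures of $\sol$ (for instance $+1$ on the $\partial_x\wedge\partial_y$ plane), in agreement with Theorem~\ref{thm:sol-metrics}. \emph{The only genuinely delicate point is the sign/orientation bookkeeping:} the Euclidean normal $X$ used in the earlier sections is $g$-orthogonal to the leaf only along the ``diagonal'' $y_1=y_2$, so matching its orientation with that of $\nu=\partial_s$ must be done with some care. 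The curvature computation itself is a two-line calculation once the rectified metric is at hand; a more direct route through the Christoffel symbols of the product metric on $\polyDisc$ is possible but requires re-deriving the unit normal in $\polyDisc$-coordinates and is noticeably messier.
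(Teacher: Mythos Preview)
Your argument is correct, and it reaches the same conclusion as the paper by a genuinely different route. The paper works entirely in the ambient $\polyDisc$-coordinates: it takes the unit normal $X=x_2\partial_2+x_4\partial_4$ (with $x_2,x_4$ the imaginary-part coordinates) and computes the covariant derivative directly from the Christoffel symbols of the half-hyperbolic product metric, obtaining $\nabla X=-dx_1\otimes\partial_1-dx_3\otimes\partial_3$; the shape operator is then visibly diagonal with eigenvalues $\{-1,-1,0\}$ in the frame $\partial_1,\partial_3,-x_2\partial_2+x_4\partial_4$. You instead pull everything back through the rectifying diffeomorphism $\Psi$, exploit that $\partial_s$ is a unit geodesic normal in the rectified metric, and read off the second fundamental form as $\tfrac12\mathcal L_{\partial_s}g$. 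Your approach makes heavier use of the machinery already built (the rectification, the flow being geodesic, the explicit pullback metric) and avoids touching Christoffel symbols; the paper's approach is more self-contained but presupposes the connection of the hyperbolic plane. Both are short once set up.

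Two small points. First, your sign discussion is a little circular: you say the sign is ``fixed by orienting $\nu$ in the direction of the normal field $X$'' but then also ``fixing the orientation so that the nonzero eigenvalues are negative''. In fact, with $\nu=\partial_s$ (which under $\Psi$ is exactly $X=y_1\partial_{y_1}+y_2\partial_{y_2}$) and the paper's convention $S(v)=\nabla_v\nu$, one gets $g(\nabla_U\partial_s,V)=\tfrac12\partial_s g(U,V)$ directly, and the eigenvalues come out as $-1,-1,0$ without any choice; it would be cleaner to just say that. Second, your caveat about the Euclidean normal from the earlier sections not being $g$-orthogonal off the diagonal is correct but irrelevant here, since both you and the paper are in the end working with the $g$-unit normal $y_1\partial_{y_1}+y_2\partial_{y_2}$ (equivalently $\partial_s$), not the Euclidean one. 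The Gauss-equation check is a nice addition the paper does not include.
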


      \begin{proof}
        Recall the principal curvatures and directions for an orientable
        submanifold $M$
        are
        determined by the \emph{shape operator}, $S$, which in codimension one,
        can be regarded as the mapping $TM \to TM$ given by
        $v_x \mapsto \nabla_{v_x}X$, where $X$ is the normal field to the
        manifold,
        compatible with orientation. Here, the principal directions and
        curvatures are the shape operator eigenvectors, and eigenvalues.
        %Referencia
        Consider a leaf embedded in $\polyDisc$,
        \[\left(x, \frac{e^{-t-s}}{\sqrt{2}}, y, \frac{e^{t-s}}{\sqrt{2}}\right),
        \]
        with normal field $X = x_2\partial_2 + x_4\partial_4$, where
        $x_2 = {e^{-t-s}}/{\sqrt{2}}$, $x_4 = {e^{t - s}}/{\sqrt{2}}$. A
        straightforward calculation shows that $\nabla X =
        -dx_1\otimes\partial_1 - dx_3 \otimes\partial_3$, i.e., the shape
        operator is diagonal, once expressed in the base for the tangent space
        to the leaf,
        spanned by the coordinate vectors $\partial_1$, $\partial_3$, and the
        vector $-x_2\partial_2 + x_4\partial_4$. Moreover, the eigenvalues are
        precisely $\{-1, -1, 0\}$.
      \end{proof}

      \section{The Heisenberg group}

      Given a symplectic vector space, $V$, with symplectic form $\omega$. Recall
      the \emph{Heisenberg} group, $\heisenberg$, is the space $V\times\R$,
      with the
      product operation given by
      \[(v, t)*(w, s) = (v + w, t + s + \omega(v, w)).\]

      If $V$ is of dimension 2, and $\{\partial_p, \partial_q\}$ is a
      symplectic base for $V$,
      that is, $\omega(\partial_p, \partial_q) = 1$,
      a well known fact from Lie group theory is that there is a faithfull
      representation $\heisenberg \to \mathrm{SL(3,\R)}$ given by

      \[\l(p \partial_p +  q \partial_q t \r) \longrightarrow
        \begin{pmatrix}
          1 & p & t + \frac{1}{2}pq\\
          0 & 1 & q\\
          0 & 0 & 1
        \end{pmatrix}.
      \]

      We will use this representation and identify $\heisenberg$ with
      $\mathrm{SL(3, \R)}$. Therefore, we will identify $\heisenberg$ with
      $\R^3$, with group structure,

      \[(a, b, c) * (a', b', c') = (a + a', b + b', c + c' + a b'),\]

      which corresponds to the matrix product

      \[  \begin{pmatrix}
          1 & a & c\\
          0 & 1 & b\\
          0 & 0 & 1
        \end{pmatrix} \begin{pmatrix}
          1 & a' & c'\\
          0 & 1 & b'\\
          0 & 0 & 1
        \end{pmatrix}.
      \]

      With these identifications, there is a natural action
      $\heisenberg \circlearrowleft \C\times\Hbb$:

      \[
        \begin{pmatrix}
          1 & a & c\\
          0 & 1 & b\\
          0 & 0 & 1
        \end{pmatrix}
        \begin{pmatrix}
          z \\ w \\ 1
        \end{pmatrix} =
        \begin{pmatrix}
          z + a w + c \\ w + b \\ 1
        \end{pmatrix},
      \]

      which we will denote $(a, b, c) * (z, w)$.

      \begin{theorem}
        The action of $\heisenberg$ in $\C\times\Hbb$ is free.
      \end{theorem}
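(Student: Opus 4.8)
The plan is to argue directly from the definition of the action, exactly as in the proof of freeness for $\sol$ acting on $\polyDisc$. Suppose $\gamma = (a,b,c)\in\heisenberg$ fixes a point $(z,w)\in\C\times\Hbb$; I must show that $\gamma$ is the neutral element, which in the coordinates we have fixed is $(0,0,0)$. Writing out the action, the hypothesis $(a,b,c)*(z,w)=(z,w)$ means
\[ (z + a w + c,\; w + b) = (z, w). \]

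First I would compare the second coordinates: $w + b = w$ forces $b = 0$ immediately. Then, comparing first coordinates, $z + a w + c = z$, i.e. $a w + c = 0$. Here is the one place where the choice of $\Hbb$ rather than all of $\C$ is used: since $a$ and $c$ are \emph{real} and $w\in\Hbb$ has strictly positive imaginary part, taking imaginary parts of $a w + c = 0$ gives $a\,\mathrm{Im}(w) = 0$, hence $a = 0$; substituting back yields $c = 0$ as well. Thus $\gamma = (0,0,0)$, the identity of $\heisenberg$, and the action is free.

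There is no real obstacle here — the argument is a short linear computation — so the only point worth stressing is structural: the positivity of $\mathrm{Im}(w)$ is exactly what makes the real coefficient $a$ vanish, mirroring the role played by the inequalities $y_k > 0$ in the corresponding statement for $\sol$. One could also phrase the whole thing representation-theoretically, noting that a fixed point of the upper-triangular unipotent matrix forces $b=0$ from the bottom entry and then $a=0$ from looking at the action on the imaginary part of the first coordinate, but the elementary computation above is the cleanest route.
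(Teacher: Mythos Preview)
Your proof is correct and follows essentially the same route as the paper: write out the two equations coming from the action, read off $b=0$ from the second, and then conclude $a=c=0$ from the first. The paper is terser, simply asserting that ``from this linear system one deduces $a=b=c=0$''; your explicit use of $\mathrm{Im}(w)>0$ to split $aw+c=0$ into $a=0$ and $c=0$ is the detail the paper leaves implicit.
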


      \begin{proof}
        If $(a, b, c) * (z, w) = (z, w)$, then
        \begin{align*}
          z + aw + c &= z,\\
               w + b &= w.
        \end{align*}
        From this linear system, one deduces that $a = b = c = 0$.
      \end{proof}

      \begin{theorem}
        For fixed $(z, w)\in\C\times\Hbb$, the orbit $h\in\heisenberg
        \mapsto h*(z, w)$,
        defines a differentiable embedding $\heisenberg \hookrightarrow
        \C\times\Hbb$.
      \end{theorem}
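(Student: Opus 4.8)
The plan is to mimic the proof of the analogous embedding statement for the $\sol$-action on $\polyDisc$: show the orbit map is smooth, show its differential has full rank at every point, and then produce a globally defined continuous left inverse, which promotes an injective immersion to an embedding.

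First I would set $f := f_{(z,w)}\colon \heisenberg \to \C\times\Hbb$, $f(a,b,c) = (a,b,c)*(z,w) = (z + aw + c,\, w + b)$, and pass to real coordinates by identifying $\C\times\Hbb$ with the open set $\{y_2 > 0\}\subset\R^4$ via $z = x_1 + i y_1$, $w = x_2 + i y_2$. In these coordinates
\[ f(a,b,c) = \left(x_1 + a x_2 + c,\; y_1 + a y_2,\; x_2 + b,\; y_2\right), \]
which is polynomial in $(a,b,c)$, hence smooth; freeness of the orbit map has already been established. A one-line computation gives
\[ [df] = \begin{pmatrix} x_2 & 0 & 1\\ y_2 & 0 & 0\\ 0 & 1 & 0\\ 0 & 0 & 0 \end{pmatrix}, \]
and since the columns $(1,0,0,0)^{t}$ and $(0,0,1,0)^{t}$ are independent and $y_2 > 0$ forces $(x_2,y_2,0,0)^{t}$ to be independent of them, $df$ has rank $3$ everywhere, so $f$ is an immersion.

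The only genuine point — and the step I would flag as the one needing care, since an injective immersion between manifolds need not be a homeomorphism onto its image — is to exhibit a continuous left inverse defined on all of $\C\times\Hbb$. Reading off the coordinates of $f$, a point $(x_1', y_1', x_2', y_2')$ in the image must satisfy $y_2' = y_2$, and solving the remaining three affine, triangular equations yields
\[ b = x_2' - x_2, \qquad a = \frac{y_1' - y_1}{y_2}, \qquad c = x_1' - x_1 - \frac{(y_1' - y_1)\,x_2}{y_2}. \]
These formulas define a smooth map $F\colon \C\times\Hbb \to \R^3 \cong \heisenberg$, and a direct substitution confirms $F\circ f = \mathrm{Id}_{\heisenberg}$. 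Consequently $f$ is injective, and the inverse of $f$ on its image is the restriction of the continuous map $F$, so $f$ is a homeomorphism onto its image. An injective immersion that is a homeomorphism onto its image is, by definition, an embedding, which finishes the argument. As in the $\sol$ case, no properness or compactness estimates are required: the explicit triangular inverse does all the work.
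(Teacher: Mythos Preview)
Your proof is correct and follows the same skeleton as the paper's: compute the Jacobian of the orbit map in real coordinates, observe that since $y_2>0$ (the paper writes $w=p+qi$ with $q>0$) the three columns are independent, so the map is an immersion, and injectivity comes from freeness of the action.

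The one genuine difference is in the final step. The paper's own proof stops after the rank computation and simply says ``the action defines a local diffeomorphism, hence an embedding,'' without justifying why the injective immersion is a homeomorphism onto its image. You, by contrast, explicitly solve the triangular system to produce a globally defined smooth left inverse $F$, exactly as the paper does in the analogous $\sol$ theorem. This is the more careful argument: an injective immersion between non-compact manifolds is not automatically an embedding, and your left inverse is what closes that gap. So your proposal is not merely equivalent to the paper's proof --- it is a strict improvement on it, and it is also the approach the paper itself uses in the parallel $\sol$ case.
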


      \begin{proof}
        The map is injective, since the action is free. Let $w = p + q i$, the
        jacobian matrix of the mapping
        in $(a, b, c) \in\heisenberg$ is given by
        \[
          \begin{pmatrix}
            p & 0 & 1\\
            q & 0 & 0\\
            0   & 1 & 0\\
            0   & 0 & 0
          \end{pmatrix}.
        \]
        Since the jacobian has rank 3, the action defines a local
        diffeomorphism, hence an embedding.
      \end{proof}

      Therefore, the action of $\heisenberg$ defines a foliation of
      $\C\times\Hbb$, in analogy with the foliation of $\polyDisc$ generated
      by $\sol$.

      \begin{theorem}
        Consider $\C\times\Hbb$ as a subset of $\R^4$, but with the
        product metric of the euclidean metric in $\C$ and the hyperbolic
        metric in $\Hbb$.
        If $e_1, \ldots, e_4$ denotes the canonical coordinates in $\R^4$,
        and $(p, q)$ denote the coordinates in $\Hbb$, then
        the vector field $X = q e_4$ is unitary and perpedicular to any leaf
        of the foliation generated by $\heisenberg$.
      \end{theorem}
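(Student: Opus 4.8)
The plan is to work throughout in the real coordinates $(x_1,x_2,p,q)$ on $\R^4$, where $z=x_1+i x_2\in\C$ and $w=p+q i\in\Hbb$, and to exploit the block structure of the ambient metric. First I would record the metric explicitly: on the $\C$–factor it is $dx_1^2+dx_2^2$ and on the $\Hbb$–factor it is $q^{-2}(dp^2+dq^2)$, so the product metric $g$ decomposes as the orthogonal sum of the $\{e_1,e_2\}$–block (the identity matrix) and the $\{e_3,e_4\}$–block (the matrix $q^{-2}I_2$). In particular $g(e_i,e_j)=0$ whenever $i\in\{1,2\}$ and $j\in\{3,4\}$, and $g(e_3,e_4)=0$ while $g(e_4,e_4)=q^{-2}$.

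Next I would produce an explicit frame for the tangent space of a leaf. Fixing $(z,w)\in\C\times\Hbb$ and applying the embedding theorem proved above, the differential of the orbit map $h\mapsto h*(z,w)$ is given by the Jacobian computed there; reading off its columns, at the point of the leaf whose $\Hbb$–coordinates are $(p,q)$ the tangent plane is spanned by
\[
\partial_a=(p,q,0,0),\qquad \partial_b=(0,0,1,0),\qquad \partial_c=(1,0,0,0).
\]
Since each leaf is a hypersurface in the $4$–manifold $\C\times\Hbb$, its normal line at that point is one–dimensional, so it suffices to verify that $X=q\,e_4=(0,0,0,q)$ is $g$–orthogonal to each of $\partial_a,\partial_b,\partial_c$ and has unit length.

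The orthogonality is then immediate from the block decomposition: $g(X,\partial_a)=0$ and $g(X,\partial_c)=0$ because $X$ lies in the $\{e_3,e_4\}$–block while $\partial_a$ and $\partial_c$ lie in the $\{e_1,e_2\}$–block, whereas $g(X,\partial_b)=q^{-2}\,(q\cdot 0)=0$ since $e_3$ and $e_4$ are orthogonal inside the hyperbolic block. Hence $X$ spans the normal line of every leaf. Finally $g(X,X)=q^{-2}\cdot q^2=1$, so $X$ is unitary, which finishes the argument. There is no genuine obstacle in this proof; the only point requiring a little care is bookkeeping the normalization of the hyperbolic metric, so that the scalar factor $q$ in the definition of $X$ cancels exactly the $q^{-1}$ coming from $\lVert e_4\rVert_g$, and everything else reduces to a one–line computation with the block-diagonal Gram matrix.
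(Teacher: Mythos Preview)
Your proof is correct and follows essentially the same approach as the paper: you read off the tangent frame $(p,q,0,0),(0,0,1,0),(1,0,0,0)$ from the Jacobian computed in the preceding theorem, then use the block-diagonal (product) structure of the metric to check orthogonality with $X=q\,e_4$ and the hyperbolic normalization to check unit length. The paper's proof is slightly terser---it invokes conformality of the hyperbolic metric to Euclidean rather than writing out $g(e_3,e_4)=0$---but the argument is the same.
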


      \begin{proof}
        From the previous theorem, the vector fields $p e_1 + q e_2$, $e_3$,
        $e_1$ generate the tangent space to the orbit of
        $(z, w)\in\C\times\Hbb$, where $w = p + q i$. Since the metric is a
        product, $X$ is perpedicular to $pe_1 + q e_2$ and $e_1$. Moreover,
        the metric in $\Hbb$ is conformal to the euclidean, and therefore
        $X$ is perpedicular to $e_3$. Finnally, $q e_4$ is unitary in the
        hyperbolic metric.
      \end{proof}

      \begin{theorem}
        Let $(z, w) \in \C\times\Hbb$, $z = x + y i$, $w = p + q i$. The
        integral curves of $X$ are geodesics.
      \end{theorem}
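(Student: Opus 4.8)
The plan is to mimic verbatim the argument already used for the normal field of the $\sol$-foliation of $\polyDisc$. First I would integrate the field $X = q\,e_4$: its defining system reads $\dot x = \dot y = \dot p = 0$ and $\dot q = q$, so the integral curve through $(z_0, w_0) = (x_0 + y_0 i,\, p_0 + q_0 i)$ is
\[
\gamma(t) = \left(x_0,\ y_0,\ p_0,\ q_0\, e^{t}\right),\qquad t\in\R,
\]
which is completely inextensible. Thus $\gamma$ is constant in the $\C$-factor and, in the $\Hbb$-factor, traces the vertical half-line $\{p = p_0\}$ with the exponential parametrization $q(t) = q_0\, e^{t}$.

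Next I would check that each factor of $\gamma$ is a geodesic of the corresponding factor metric. The $\C$-component is constant, hence trivially a constant-speed geodesic of the Euclidean plane. For the $\Hbb$-component, vertical half-lines are the standard geodesics of the hyperbolic metric $\frac{dp^2 + dq^2}{q^2}$; moreover the chosen parametrization has speed $\|\dot\gamma\| = |\dot q|/q = 1$, so it is an affine (unit-speed) parametrization of that geodesic — this is precisely what makes $X$ a \emph{unit} normal field, as recorded in the previous theorem.

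Finally, since the metric on $\C\times\Hbb$ is a Riemannian product, a curve is a geodesic as soon as each of its projections is an affinely parametrized geodesic of the respective factor (see 3.15 in \cite{gallot2012riemannian}); applying this to $\gamma$ gives the claim. The only mild subtlety is the bookkeeping about parametrizations: one must observe that the constant $\C$-component counts as a degenerate geodesic and that $q_0\, e^{t}$ is an affine parameter rather than an arbitrary reparametrization of the vertical line. I do not expect any genuine obstacle here; the proof is the exact analogue — with $\heisenberg$ in place of $\sol$ and the half-plane geodesic in place of the $\Hbb\times\Hbb$ geodesic — of the earlier statement that the integral curves of the normal field to the $\sol$-leaves are geodesics.
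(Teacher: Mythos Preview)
Your argument is correct. In the paper the proof environment for this theorem is left \emph{empty}, so there is no proof to compare against; your write-up is exactly the natural adaptation of the paper's own argument for the $\sol$ case (the theorem ``Integral curves of the normal field $X$ are geodesics'' in the extrinsic-geometry section), with the constant $\C$-component playing the role that the second vertical hyperbolic geodesic played there, and the same appeal to 3.15 in \cite{gallot2012riemannian} for the product-metric step.
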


      \begin{proof}

      \end{proof}

      Altough in this case, the action induced by the normal field $X$
      \emph{is not} equivariant, we can describe in a precise way the
      quotients $\C\times\Hbb/\Gamma$, where $\Gamma$ is a discrete subgroup
      of $\heisenberg$. Moreover, if $\Gamma$ acts properly discontinous in
      $\C\times\Hbb$, it has to act in the same way in Heisenberg, because
      the slices $\heisenberg\times\{qi\}$ are preserved. This is a
      general property of lie groups.

      \begin{theorem}
        Let $X$ and $Y$ be two locally compact spaces.
        If $\Gamma \circlearrowleft X \times Y$, and the action of $g\in\Gamma$
        can be decomposed as $g\cdot(x, y) = (g\cdot x, y)$ then $\Gamma$
        acts properly discontinous in $X$ iff it acts properly discontinous in
        $X \times Y$.
      \end{theorem}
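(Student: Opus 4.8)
The plan is to argue both implications directly from the definition of proper discontinuity in terms of compact sets, using only the elementary fact that a continuous map carries compact sets to compact sets. Throughout, write $\pi_X\colon X\times Y\to X$ for the projection onto the first factor; it is continuous, so it sends compact subsets of $X\times Y$ to compact subsets of $X$. One may assume $Y\neq\varnothing$, since otherwise $X\times Y=\varnothing$ and both sides of the equivalence hold vacuously; so fix once and for all a point $y_0\in Y$. The whole argument then amounts to exhibiting, for each direction, a containment of the corresponding ``bad'' subsets of $\Gamma$.

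For the implication that proper discontinuity on $X$ implies proper discontinuity on $X\times Y$, I would take arbitrary compact sets $K,L\subset X\times Y$ and set $C=\pi_X(K)$, $D=\pi_X(L)$, both compact in $X$. If $g\in\Gamma$ satisfies $g\cdot K\cap L\neq\varnothing$, choose $(x,y)\in K$ with $g\cdot(x,y)=(g\cdot x,y)\in L$; then $x\in C$ and $g\cdot x\in D$, so $g\cdot C\cap D\neq\varnothing$. Hence $\{g\in\Gamma: g\cdot K\cap L\neq\varnothing\}\subseteq\{g\in\Gamma: g\cdot C\cap D\neq\varnothing\}$, and the latter set is finite by hypothesis.

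For the converse, given compact $C,D\subset X$, I would use the compact sets $K=C\times\{y_0\}$ and $L=D\times\{y_0\}$ in $X\times Y$. If $g\cdot C\cap D\neq\varnothing$, pick $x\in C$ with $g\cdot x\in D$; then $(x,y_0)\in K$ and $g\cdot(x,y_0)=(g\cdot x,y_0)\in L$, so $g\cdot K\cap L\neq\varnothing$. Therefore $\{g\in\Gamma: g\cdot C\cap D\neq\varnothing\}\subseteq\{g\in\Gamma: g\cdot K\cap L\neq\varnothing\}$, which is finite because $\Gamma$ acts properly discontinuously on $X\times Y$. Combining the two containments yields the equivalence.

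There is no genuine obstacle here; the statement is essentially bookkeeping with projections. The one point that deserves a word of care is that an arbitrary compact subset of $X\times Y$ need not be a product of compact sets — which is exactly why in the first implication one passes to the projection $\pi_X(K)$ rather than trying to write $K$ as some $C\times D$ — together with not overlooking the degenerate case $Y=\varnothing$. It is worth noting that the local compactness hypothesis is not actually invoked under this (compact-set) formulation of proper discontinuity, though it is harmless to retain; it also makes transparent why, in the situations of interest, proper discontinuity of a lattice $\Gamma$ on $\C\times\Hbb$ (respectively $\polyDisc$) is equivalent to its proper discontinuity on the slice $\heisenberg\times\{qi\}$ (respectively on a $\sol$-leaf).
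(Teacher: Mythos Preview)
Your argument is correct and essentially matches the paper's for the direction ``properly discontinuous on $X\times Y$ $\Rightarrow$ properly discontinuous on $X$'': both of you multiply a compact $C\subset X$ by a singleton $\{y_0\}$ and compare the two ``bad'' sets of group elements. The difference lies in the other direction. Given a compact $K\subset X\times Y$, the paper invokes local compactness to manufacture a product box $\bar U\times\bar V$ with $K\subset\bar U\times\bar V$ and $\bar U$ compact in $X$, and then observes $\{g: gK\cap K\neq\varnothing\}\subset\{g: g\bar U\cap\bar U\neq\varnothing\}$. You instead apply the continuous projection $\pi_X$ to $K$ (and to $L$) directly, obtaining compact subsets of $X$ without any enlargement step. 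Your route is a bit more streamlined and, as you correctly point out, shows that the local compactness hypothesis is not actually used under the compact-set formulation of proper discontinuity; the paper's route, by contrast, does lean on that hypothesis to build the covering box. Either way the containments of ``bad'' sets are the same in spirit, so the two proofs differ only in this one bookkeeping device.
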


      \begin{proof}
        Let $K \subset X$ be a compact set. Fix $y \in Y$. With the product
        topology, $K\times\{y\}$ is a compact set in $X \times Y$.
        One can easily
        verify the equality
        \[\l\{g\in\Gamma: g\cdot K \cap K \neq \emptyset \r\} =
          \l\{g\in\Gamma: g\times 1 \cdot K\times\{y\} \cap K\times\{y\}
          \neq \emptyset \r\}.\]
        If $\Gamma$ acts properly discontinous in $X \times Y$, the previous
        equality implies that it acts properly discontinous in $X$. On the
        other hand, if $K \subset X \times Y$ is compact, the product topology
        together with the local compacity implies that we can find an open set
        $U \times V$, with $U \in X$ and $V \in Y$, such that $\bar{U}$ is
        compact in $X$, $\bar{V}$ is compact in $Y$, and
        $K \subset U\times V$. We have the contention
        \[\l\{g\in \Gamma: gK \cap K \neq \emptyset \r\} \subset
          \l\{g\in \Gamma: g\cdot\l(\bar{U}\times\bar{V}\r) \cap
          \bar{U}\times\bar{V}
          \neq \emptyset \r\}\]
        Take $g\in\Gamma$, $(x, y) \in \bar{U}\times\bar{V}$, such that
        $g\cdot(x, y) \in \bar{U}\times\bar{V}$. Since
        $g\cdot(x, y) = (g\cdot x, y)$, it follows that $g\cdot x \in \bar{U}$.
        Therefore, the second set in the previous contention is
        at the same time contained in
        \[\l\{g\in \Gamma: g\cdot\bar{U} \cap \bar{U} \neq \emptyset \r\}.\]
        If $\Gamma$ acts properly discontinous in $X$, this set
        has to be finite, and the same must be true for the set of
        intersections in $X\times Y$. i.e. $\Gamma$ acts properly
        discontinous in $X$.
      \end{proof}

      \begin{theorem}
        $\C\times\Hbb$ is diffeomorphic to $\heisenberg\times\R$, where,
        up to diffeomorphism, $\heisenberg$ acts on the first factor only.
      \end{theorem}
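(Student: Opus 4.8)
The plan is to construct by hand a diffeomorphism $\Theta\colon\heisenberg\times\R\to\C\times\Hbb$ carrying the given $\heisenberg$-action on $\C\times\Hbb$ to the action $g\cdot(h,s)=(gh,s)$, that is, left translation on the first factor and the identity on the second. The starting point is the observation, visible already in the formula $(a,b,c)*(z,w)=(z+aw+c,\,w+b)$, that the action fixes the coordinate $\mathrm{Im}(w)$; hence each slice $S_{q_0}=\{(z,w)\in\C\times\Hbb:\mathrm{Im}(w)=q_0\}$, $q_0>0$, is $\heisenberg$-invariant, and these slices are exactly the leaves of the foliation (the leaf through $(z,w)$ being its orbit, which equals $S_{\mathrm{Im}(w)}$). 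Note the transversal $s\mapsto(0,e^si)$ is an integral curve of the normal field $X=q\,e_4$ and meets each leaf once.

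First I would verify that for each $q_0>0$ the orbit map $f_{q_0}\colon\heisenberg\to\C\times\Hbb$, $h\mapsto h*(0,q_0i)$, is a diffeomorphism onto $S_{q_0}$: injectivity is freeness of the action, the embedding property is the embedding theorem proved above, and for $h=(\alpha,\beta,\gamma)$ one computes $h*(0,q_0i)=(\gamma+\alpha q_0 i,\ \beta+q_0 i)$, whose set of values is precisely $S_{q_0}$, with smooth inverse $(x+yi,\,p+q_0i)\mapsto(y/q_0,\,p,\,x)$. Being an orbit map, $f_{q_0}$ is automatically $\heisenberg$-equivariant, $f_{q_0}(gh)=g*f_{q_0}(h)$. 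This is precisely where one must use the orbit parametrization rather than the flow of $X$: as remarked in the paragraph preceding the discussion of locally compact spaces, here the family of diffeomorphisms induced by the flow of $X$ is \emph{not} equivariant (unlike the $\sol$ case, where $\psi_s\circ f_z=f_{\psi_s(z)}$), so flowing a single leaf would not put the action in product form---whereas the orbit maps do, for free.

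Next I would assemble the $f_{q_0}$ into $\Theta\colon\heisenberg\times\R\to\C\times\Hbb$, $\Theta(h,s)=f_{e^s}(h)=h*(0,e^si)$. It is injective: comparing imaginary parts of the second coordinate forces $s=s'$, and then freeness forces $h=h'$. It is surjective: any $(z,w)$ lies in $S_q$ with $q=\mathrm{Im}(w)>0$, hence equals $\Theta(h,\ln q)$ for a unique $h$. Both $\Theta$ and its explicit inverse $(x+yi,\,p+qi)\mapsto\bigl((y/q,\,p,\,x),\,\ln q\bigr)$ are smooth, so $\Theta$ is a diffeomorphism (the Jacobian is also easily seen to be everywhere nonsingular). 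Finally, the equivariance of the $f_{q_0}$ yields $g*\Theta(h,s)=\Theta(gh,s)$, so in the $\heisenberg\times\R$ model the action is $g\cdot(h,s)=(gh,s)$: $\heisenberg$ acts on the first factor by left translations---its classical action on itself---and trivially on the second, which is the claim.

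I do not expect a genuine obstacle here: the content is the bookkeeping above plus one conceptual choice---parametrizing the leaves by the orbit maps $f_{q_0}$ along the transversal $\{(0,q_0i)\}$ rather than by flowing a single leaf with $X$, the latter having worked in the $\sol$ case only because of the identity $\psi_s\circ f_z=f_{\psi_s(z)}$ that fails here. Since every orbit map is automatically equivariant, the construction sidesteps this and gives the product decomposition directly. If one wants the first-factor action in its most familiar form, rescaling $\mathrm{Im}(z)$ by $\mathrm{Im}(w)$ on each slice identifies $f_{q_0}$ with $\heisenberg$ carrying the product $(a,b,c)(a',b',c')=(a+a',b+b',c+c'+ab')$ and its left action verbatim.
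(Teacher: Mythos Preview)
Your proposal is correct and follows essentially the same approach as the paper: both define the diffeomorphism by the orbit map $\Theta(h,s)=h*(0,q_0 i)$ along the transversal of purely imaginary second coordinates, check bijectivity via freeness plus the invariance of $\mathrm{Im}(w)$, and derive the product action from associativity. Your version is in fact a bit more careful---you use the parametrization $q_0=e^{s}$ so that the domain is genuinely $\heisenberg\times\R$ (the paper writes $\Psi(\gamma,q)=\gamma\cdot(0,qi)$ with $q\in\R$, tacitly requiring $q>0$, and only switches to $e^{s}$ in the next theorem), and you spell out the explicit smooth inverse and the equivariance identity.
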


      \begin{proof}
        Let $\gamma = (a, b, c)\in \heisenberg$. And take
        $(0, qi)\in\C\times\Hbb$. We can describe the orbits
        $\gamma\cdot(0, qi)$:

        \[
          \begin{pmatrix}
            1 & a & c\\
            0 & 1 & b\\
            0 & 0 & 1
          \end{pmatrix} \cdot
          \begin{pmatrix}
            0 \\ qi \\ 1
          \end{pmatrix} =
          \begin{pmatrix}
            aqi + c\\ qi + b\\1
          \end{pmatrix}.
        \]

        Therefore, there is exactly one $(0, qi)$ in each orbit of the
        group action. Define $\Psi: \heisenberg\times\R\to\C\times\Hbb$ as
        \[\Psi(\gamma, q) = \gamma\cdot(0, qi).\]
        It can be show that $\Psi$ is bijective. It is a diffeomorphism,
        since an explicit calculation shows that $d\Psi$ maps the
        canonical vectors $T_{(\gamma,q)}\heisenberg\times\R\cong \R^4 \to
        T_{\gamma\cdot(0,qi)}\C\times\Hbb \cong \R^4$:

        \[\l\{\partial_1,\ldots,\partial_4\r\} \longmapsto
          \l\{q\partial_2,\partial_3, \partial_1, a\partial_2 + \partial_4\r\}.
        \]

        The last assertion follows since the action is asociative, i.e.
        $\gamma'\cdot(\gamma\cdot (0, qi)) = (\gamma'\cdot\gamma)\cdot(0, qi)$,
        and therefore, preserves the imaginary part on the second factor.
      \end{proof}

      \begin{corollary}
        If $\Gamma < \heisenberg$ is a discrete subgroup acting
        properly discontinous in $\C\times\Hbb$, up to diffeomorphism,
        $\C\times\Hbb / \Gamma \cong \heisenberg/\Gamma \times \R$, and
        the quotient $\heisenberg/\Gamma$ is a manifold, whose fundamental
        group is $\pi^1(\heisenberg/\Gamma)\cong \Gamma$.
      \end{corollary}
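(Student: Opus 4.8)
The plan is to combine the two structural results already at hand: the diffeomorphism $\Psi\colon\heisenberg\times\R\to\C\times\Hbb$ of the previous theorem, under which $\heisenberg$ acts only on the first factor, and the general product-space lemma characterizing proper discontinuity on $X\times Y$ in terms of proper discontinuity on $X$.

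First I would transport the $\Gamma$-action along $\Psi$. Since $\Psi$ intertwines the $\heisenberg$-action on $\C\times\Hbb$ with the action $\gamma\cdot(h,q)=(\gamma h,q)$ on $\heisenberg\times\R$ (this is the content of ``up to diffeomorphism, $\heisenberg$ acts on the first factor only''), the induced $\Gamma$-action on $\heisenberg\times\R$ is by left translation in the first coordinate and trivial in the second. In particular it is free, because left translation by a non-identity element of a group has no fixed point. Next I would invoke the product lemma: since $\Gamma$ acts properly discontinuously on $\C\times\Hbb\cong\heisenberg\times\R$ and the action splits off the $\R$ factor, $\Gamma$ acts properly discontinuously on $\heisenberg$. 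A free, properly discontinuous action of a discrete group on a smooth manifold has a smooth manifold as quotient, with the quotient map a covering; hence $\heisenberg/\Gamma$ is a smooth manifold and $\heisenberg\to\heisenberg/\Gamma$ is a covering map with deck group $\Gamma$.

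Then the diffeomorphism $\Psi$ descends to orbit spaces: $\C\times\Hbb/\Gamma\cong(\heisenberg\times\R)/\Gamma$, and since $\Gamma$ acts trivially on the second factor, the latter is $(\heisenberg/\Gamma)\times\R$. Finally, for the fundamental group I would use that $\heisenberg$ is diffeomorphic to $\R^3$, hence contractible and in particular simply connected; therefore $\heisenberg\to\heisenberg/\Gamma$ is the universal cover, its group of deck transformations is $\Gamma$, and consequently $\pi_1(\heisenberg/\Gamma)\cong\Gamma$.

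The only point requiring care — rather than a genuine obstacle — is the first step: being precise about the meaning of ``up to diffeomorphism, $\heisenberg$ acts on the first factor only'', so that the transported action is literally $\gamma\cdot(h,q)=(\gamma h,q)$; it is exactly this normal form that makes both the freeness argument and the hypothesis of the product lemma available. Once that is pinned down, everything else is the standard covering-space dictionary for discrete subgroups of a simply connected Lie group. (One could also remark that the hypothesis that $\Gamma$ act properly discontinuously on $\C\times\Hbb$ is automatic, since a discrete subgroup of $\heisenberg$ always acts properly discontinuously by left translations, and then again apply the product lemma in the reverse direction; but this is not needed for the statement.)
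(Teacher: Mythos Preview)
Your proposal is correct and follows exactly the route the paper intends: the corollary is stated without proof, as an immediate consequence of the preceding diffeomorphism $\Psi:\heisenberg\times\R\to\C\times\Hbb$ (under which $\heisenberg$ acts only on the first factor) together with the earlier product-space lemma on proper discontinuity. You have simply made explicit the standard covering-space argument (freeness of left translation, contractibility of $\heisenberg\cong\R^3$, hence $\pi_1(\heisenberg/\Gamma)\cong\Gamma$) that the paper leaves to the reader.
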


      \begin{example}
        Let $\heisenberg_\mathbb{Z} < \heisenberg$ be the discrete subgroup
        of Heisenberg matrices wiht integer coeficients. It can be shown that
        the unit cube $K_C = [0, 1]^{3} \subset \heisenberg$ is a fundamental
        region for the action of $\heisenberg_\mathbb{Z}$. The quotient
        $\heisenberg_\mathbb{Z}/\heisenberg$ is an example of nilmanifold,
        whose fundamental group is
        \[\heisenberg_\mathbb{Z} \cong \langle m, n, k : [m, n] = k^4 \rangle.\]
        In view of the previous results, $\heisenberg_\mathbb{Z}$ acts
        properly discontinous in $\C\times\Hbb$, and the quotient
        $\C\times\Hbb/\heisenberg_\mathbb{Z}$ is a product of a nilmanifold
        times $\R$, whose fundamental group has the previous presentation.
      \end{example}

      \subsection{Metric properties}

      Let $(p, q, t)$ denote local coordinates in $\heisenberg$. In this
      coordinates, the standard metric is defined to be
      \[dp^2 + (1 + p^2)dq^2 + dt^2 -p dq\cdot dt.\]

      However, we found no relation between the standard metric and the
      metric induced by the family of embeddedings $\heisenberg \to
      \C\times\Hbb$ induced by the action.

      \begin{theorem}
        Let $(0, y_0\cdot i)\in\C\times\Hbb$ be the base point whose orbit generates
        a diffeomorphic copy of $\heisenberg$. Then the pullback metric
        in $\heisenberg$ is
        \[y_0^2dp^2 + \frac{1}{y_0^2} dq^2 + dt^2.\]
      \end{theorem}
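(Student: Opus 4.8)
The plan is to pull the product metric back through the explicit orbit embedding and read off the coefficients. Write $\phi_{y_0}\colon\heisenberg\to\C\times\Hbb$ for the map $\phi_{y_0}(\gamma)=\gamma*(0,y_0 i)$, using the matrix coordinates $(p,q,t)$ on $\heisenberg$ fixed in the previous subsection. Carrying out the matrix product that defines the action gives
\[\phi_{y_0}(p,q,t)=\bigl(t+p\,y_0\,i,\; q+y_0\,i\bigr),\]
so in the ambient $\R^4$ coordinates $(x_1,y_1,x_2,y_2)$ — the real and imaginary parts of the two factors — the embedding reads $(x_1,y_1,x_2,y_2)=(t,\; p\,y_0,\; q,\; y_0)$.

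First I would record the differential, which is immediate from this formula: $d\phi_{y_0}$ carries $\partial_p\mapsto y_0\,e_2$, $\partial_q\mapsto e_3$, $\partial_t\mapsto e_1$ (consistent with the rank‑$3$ Jacobian found for the orbit map earlier, specialised to the base point $(0,y_0 i)$). The structurally important observation is that the imaginary part of the second ($\Hbb$) factor is \emph{constant}, $y_2\equiv y_0$, along the whole leaf; hence $dy_2$ pulls back to $0$ and the $\Hbb$‑factor of the metric contributes only its $dx_2^2/y_2^2$ part.

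Next I would substitute into the product metric $g=dx_1^2+dy_1^2+\dfrac{dx_2^2+dy_2^2}{y_2^2}$. Because $e_1,e_2,e_3$ are pairwise $g$‑orthogonal — the first two lying in the Euclidean $\C$‑factor and the third in the $\Hbb$‑factor — every mixed term of $\phi_{y_0}^{*}g$ vanishes, while the diagonal entries are $g(y_0 e_2,y_0 e_2)=y_0^2$ in the $dp$‑slot, $g(e_3,e_3)=1/y_2^2=1/y_0^2$ in the $dq$‑slot (using $y_2=y_0$ on the image), and $g(e_1,e_1)=1$ in the $dt$‑slot. Therefore
\[\phi_{y_0}^{*}g \;=\; y_0^2\,dp^2+\frac{1}{y_0^2}\,dq^2+dt^2,\]
which is the asserted expression.

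There is essentially no obstacle here beyond bookkeeping; the one point to watch is the coordinate convention. One must use the matrix coordinates $(p,q,t)=(a,b,c)$ rather than exponential coordinates, since with the latter the corner entry $t+\tfrac12 pq$ would inject cross terms $dp\,dt$ and $dq\,dt$ into the pullback and spoil the diagonal form. It is also worth stating explicitly that the hyperbolic metric in force here is the standard one $\dfrac{dx^2+dy^2}{y^2}$, not the homothetic normalisation used in the $\sol$ sections; this is precisely what produces the clean coefficients $y_0^2$ and $1/y_0^2$.
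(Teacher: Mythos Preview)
Your proof is correct and follows essentially the same route as the paper: write the orbit map explicitly as $\psi(p,q,t)=(t+p\,y_0\,i,\,q+y_0\,i)$, read off that $d\psi$ sends $\partial_p,\partial_q,\partial_t$ to $y_0\,\partial_2,\,\partial_3,\,\partial_1$, and evaluate the product (Euclidean $\times$ hyperbolic) metric on these images. Your version is simply more explicit than the paper's about which metric is in force and why the off-diagonal terms vanish, and your closing remarks on the coordinate convention and the normalisation of the hyperbolic factor are accurate and helpful context that the paper leaves implicit.
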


      \begin{proof}
        Denote by $\psi:\heisenberg \to \C\times\Hbb$ the map defined by
        the action of $\heisenberg$ in the given point, then
        \[\psi(p, q, t) = (t + p\,y_0\. i, q + y_0\,i).\]
        For fixed $h = (p, q, t) \in \heisenberg$, the derivative $d\psi_h$
        induces a linear map $T_h\heisenberg \to T_{\psi(h)}\C\times\Hbb$, such
        that the basic tangent vectors $\partial_p, \partial_q, \partial_t$ are
        send to $y_0\partial_2, \partial_3, \partial_1$ respectively.

        Upon
        identifying the basic vectors in $\heisenberg$ with its images,
        the local expression for the induced metric is obtained.
      \end{proof}

      \begin{remark}
        Note how the local expression for the metric resembles that of $\sol$.
        In fact, under the diffeomorphism $\heisenberg\times\R \to \C\times\Hbb$
        $y_0$ turns out to be an expression of the form $e^s$, for $s\in \R$.
        For $s$ fixed at least, the induced metric becomes
        \[e^{2s}dp^2 + e^{-2s} dq^2 + dt^2,\]
        which looks analogous to what would be obtained in a section of
        $\sol\times\R$.
      \end{remark}

      \begin{remark}
        Altough the diffeomorphism $\heisenberg\times\R \cong \C\times\Hbb$
        lacks a metric relation, we can calculate nevertheless the
        separation between two folitation leaves, in analogy to what we did
        with $\sol$.
      \end{remark}

      \begin{theorem}
        Let $\Psi:\heisenberg\times\R\to\C\times\Hbb$ be the diffeomorphism
        induced by the action of $\heisenberg$ in $(0, e^s\cdot i)$, for
        $s \in \R$. Then the separation between the hyperplanes (leaves)
        $\heisenberg\times\{s_0\}$ and  $\heisenberg\times\{s_1\}$ is
        precisely $|s_1 - s_0|$.
      \end{theorem}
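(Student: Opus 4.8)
The plan is to make the leaves $\Psi(\heisenberg\times\{s\})$ completely explicit and then, using that the ambient metric on $\C\times\Hbb$ is a Riemannian product, to reduce the distance computation to a one--dimensional hyperbolic estimate. First I would identify the leaves: the diffeomorphism of the statement is $\Psi(\gamma,s)=\gamma\cdot(0,e^s i)$, and evaluating the Heisenberg action on this base point with $\gamma=(a,b,c)$ gives
\[\Psi(\gamma,s)=\bigl(c+a\,e^s i,\; b+e^s i\bigr).\]
As $(a,b,c)$ runs over $\R^3$ the real numbers $c$, $a\,e^s$ and $b$ vary independently over $\R$, while the imaginary part of the second coordinate stays fixed and equal to $e^s>0$. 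Hence the leaf is the horizontal slab
\[\Psi\bigl(\heisenberg\times\{s\}\bigr)=\bigl\{(z,w)\in\C\times\Hbb:\operatorname{Im}w=e^s\bigr\},\]
so that distinct leaves are disjoint level sets of $\operatorname{Im}w$, in agreement with the foliation picture.

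Next I would invoke that the metric on $\C\times\Hbb$ is the product of the Euclidean metric on $\C$ with the hyperbolic metric on $\Hbb$; since a length--minimizing curve in a Riemannian product projects to length--minimizing curves in the two factors, the induced distance is
\[d\bigl((z_0,w_0),(z_1,w_1)\bigr)=\sqrt{|z_0-z_1|^2+d_{\Hbb}(w_0,w_1)^2}\]
(cf.\ \cite{gallot2012riemannian}). Thus the separation between the leaves at $s_0$ and $s_1$ is the infimum of this expression over all $z_0,z_1\in\C$ and all $w_j\in\Hbb$ with $\operatorname{Im}w_j=e^{s_j}$. Taking $z_0=z_1$ annihilates the Euclidean term, so the separation equals $\inf\{d_{\Hbb}(w_0,w_1):\operatorname{Im}w_0=e^{s_0},\ \operatorname{Im}w_1=e^{s_1}\}$, that is, the hyperbolic distance between the two horocycles based at $\infty$ at heights $e^{s_0}$ and $e^{s_1}$.

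To finish, I would estimate this infimum directly: for a piecewise--smooth path $\tau\mapsto(u(\tau),v(\tau))$ in $\Hbb$ joining height $e^{s_0}$ to height $e^{s_1}$, its hyperbolic length satisfies
\[\int\frac{\sqrt{\dot u^2+\dot v^2}}{v}\,d\tau\ \geq\ \int\frac{|\dot v|}{v}\,d\tau\ \geq\ \Bigl|\int\frac{\dot v}{v}\,d\tau\Bigr|=|s_1-s_0|,\]
with equality along the vertical segment from $e^{s_0}i$ to $e^{s_1}i$, whose length is $\int_{e^{s_0}}^{e^{s_1}}dv/v=|s_1-s_0|$. Hence the infimum is $|s_1-s_0|$, attained by $z_0=z_1=0$, $w_j=e^{s_j}i$ joined by that vertical geodesic (which is precisely an integral curve of the normal field $X=q\,e_4$), and the theorem follows. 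The only step with any real content is the lower bound in this last display --- that no curve between the two horizontal lines beats the vertical segment --- together with the product--metric distance identity; both are classical, so everything else is bookkeeping of the explicit form of $\Psi$. In particular, in contrast with the $\sol$ computation, no extra constant appears, because the $\Hbb$--factor here carries the genuine hyperbolic metric rather than a homothetic rescaling of it.
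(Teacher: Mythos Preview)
Your proof is correct and follows essentially the same route as the paper: identify the leaf through $s$ as the level set $\{\operatorname{Im} w = e^s\}$, use the product structure of the metric on $\C\times\Hbb$ to reduce the separation to the hyperbolic distance between the two horocycles $\operatorname{Im} w = e^{s_0}$ and $\operatorname{Im} w = e^{s_1}$, and realize that distance by the vertical geodesic of length $|s_1-s_0|$. The only cosmetic difference is that the paper uses the projection inequality $\ell(\gamma)\ge \ell(\gamma_2)$ for paths in a product, whereas you invoke the Pythagorean product--distance formula $d=\sqrt{d_\C^2+d_\Hbb^2}$ and give the explicit integral lower bound $\int\sqrt{\dot u^2+\dot v^2}/v\ge |s_1-s_0|$; both are equivalent here and your version is slightly more self--contained.
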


      \begin{proof}
        Given coordinates $(p, q, t, s)$ for $\heisenberg\times\R$, the
        expression for $\Psi$ is $\Psi(p, q, t, s) = (t + p\,e^si, q + e^si)$.

        Note that the metric in $\C\times\Hbb$ is invariant under
        \emph{``horizontal traslations''}: $(p, q, t, s) \mapsto
        (p+p_0, q + q_0, t + t_0, s)$. Given a path
        $\gamma = (\gamma_1, \gamma_2):[0, 1] \to \C\times\Hbb$ connecting a
        point in the first
        leave to a point in the second. Since the metric is a product,
        the length of the second component
        $\gamma_2$, has to be a lower bound for the length of the total path:
        \[\ell(\gamma_2) \leq \int_0^1||\dot{\gamma}_2||dt
        \leq \int_0^1\sqrt{||\dot{\gamma}_1||^2 + ||\dot{\gamma}_2||^2}
        = \ell(\gamma).\]
      Consider the path $\Psi\l(0, 0, 0, e^{(s(s_1 - s_0) + s_0)}\r)$, joining
      $(0, e^{s_0}\,i)$ to $(0, e^{s_1}\,i)$. Since the projection to $\Hbb$ of
      any other path has to connect this points, the previous bound shows that
      its lenght has to be lesser than that of this path. However, in $\Hbb$,
      the minimum separation between the lines $\R\times\{\{e^{s_0}\}$ and
      $\R\times\{e^{s_1}\}$ is precisely given by the path
      $e^{(s(s_1 - s_0) + s_0)}i$, whose length is $|s_1 - s_0|$.
    \end{proof}

      \section{The topological type}

      In this paper we described two difeommorphisms:
      $\polyDisc \cong \sol\times \R$ and
      $\C\times\Hbb\cong\heisenberg\times\R$. In the first case, the action
      of the group was equivariant with respect to the flow of the normal
      field to the orbits. In the second case, there isn't such equivariance.
      However, in both cases, the action of the group can be
      \emph{factored out}, that is to say, in both cases, if $G$ denotes
      the corresponding group and $X$ the target space, there is
      a diffeomorphism,
      \[X \cong G \times \R,\]
      such that the action of $G$ preserves the leaves $G\times\l\{t\r\}.$
      Moreover, up to diffeomorphism, the action can be described as
      $\gamma'\cdot(\gamma, x) = (\gamma'\gamma, x)$.

      Let $\Gamma < G$ be a discrete subgroup. The previous discusion shows
      that we can describre the quotient:
      $\Gamma/X \cong (\Gamma/G) \times \R$. In particular, since the second
      factor is contractible, we must have isomorphisms
      $\pi_1(\Gamma/X) \cong \pi_1(\Gamma/G)$, so that we can describe both
      quotients $\Gamma/(\polyDisc)$, and $\Gamma\times(\C\times\Hbb)$, where,
      for an abuse in notation, $\Gamma$ denotes distinct discrete subgroups
      of $\sol$ and $\heisenberg$.
      
      \subsection{proof of main theorem 1}
      
      Let $G$ be a complex kleinian group with maximum number of lines in general position contained in its limit set is four, then $G$
      acts properly and discontinuously in  four copies disjoints of $\polyDisc$. Without loss of generality we  assume  that $\polyDisc$ is
      $G$-invariant. By the theorem, we have
      
      $$\psi: \sol\times \mathbb{R}\rightarrow \polyDisc$$
      is a diffeomorphism $G$-equivariant, then  $\polyDisc /G$ is diffeomorphic to $(\sol/G)\times \mathbb{R}$.
      
      We notice the topological type is perfectly determinde by the group $G$.In fact,  the group $G$ is the  fundamental group of the manifold 
      $\polyDisc/G$. We remember the Kulkarni discontinuity region is equal a four copies disjoint of $\polyDisc$, Hence $\Omega/G$ is equal to 
      four disjoint copies of $\polyDisc/G$. We remark  $G$ represented a lattice of the Lie group $\sol$, then $\sol /G$ is a  compact 3 manifold.This
      last statement implies  in some sense $\sol/G$  is the compact heart of  $\polyDisc/G$.


\begin{thebibliography}{10}
\bibitem{BCN} W. Barrera, A. Cano, J. P. Navarrete, The Limit Set
of Discrete Subgroups of $\textrm{PSL} (3, \mathbb{C})$, Math. Proc.
Camb. Phil. Soc., Vol. 150, 2011, pp. 129-146.

\bibitem{BCN1} W. Barrera, A. Cano, J. P. Navarrete, One line complex Kleinian groups,
Pacific Journal of Mathematics, to appear.

\bibitem{BCN4} W. Barrera, A. Cano, J. P. Navarrete, Subgroups of $\textrm{PSL}(3,
\mathbb{C})$ with four lines in general position in its limit set.
Conformal Geometry and Dynamics, Volume 15, Pages 160-176 (October
11, 2011).

\bibitem{BCN5} W. Barrera, A. Cano, J. P. Navarrete, Pappus' theorem
and a construction of complex Kleinian groups with rich dynamics.
Bulletin of the Brazilian Mathematical Society, March 2014, Vol. 45,
issue 1, pp. 25-42.
%%%%%%%
%%%%
\bibitem{BCN6} W. Barrera, A. Cano, J. P. Navarrete, On the number of lines in the limit set for 
discrete subgroups of $\textrm{PSL}(3,\mathbb{C})$. Pacific Journal of Mathematics, 
Vol. 281, No.1, 2016, pp. 17-49.
%%%%%
\bibitem{CNS} A. Cano, J. P. Navarrete, J. Seade {\it Complex Kleinian
Groups}, Progress in Mathematics, 303, Birk\"auser 2013.

\bibitem{cps} A. Cano, J. Parker, J. Seade, Actions of
$\mathbb{R}$-Fuchsian groups on $\mathbb{C}\mathbb{P}^2$, preprint.

\bibitem{cs0}
A. Cano, J. Seade; On the Equicontinuity region of Discrete
Subgroups of PU(1,n). J Geom. Anal. (2010) 20, pp 291-305.

\bibitem{CS} A. Cano, J. Seade, On Discrete groups of Automorphisms
of $\mathbb{C}\mathbb{P}^2$. Geometriae Dedicata. Published on line 08 January
2013. DOI 10.1007/s10711-012-9816-2

\bibitem{H}P. de la Harpe, Topics in Geometric Group Theory, Chicago Lectures in Mathematics, 2000.

%\bibitem{Cartan} H. Cartan, S. Eilenberg, Homological Algebra,
%Princeton University Press, 1999.

%\bibitem{Corwin} L. J. Corwin, F. P. Greenleaf, Representations of
%Nilpotent Lie groups  and their Applications, Part I: Basic Theory
%and Examples. Cambridge University Press, 1989.

%\bibitem{Dekimpe} K. Dekimpe, Almost Bieberbach Groups: Affine and
%Polynomial Structures. Lecture notes in mathematics 1639.
%Springer-Verlag 1996.

%\bibitem{Fil} J. P. Fillmore, J. Scheuneman, Fundamental groups of compact complete
%locally affine complex surfaces. Pacific Journal of Mathematics,
%Vol. 44, No. 2, 1973, pp. 487-496.

\bibitem{Kul} R. S. Kulkarni,  \textit{Groups with Domains of Discontinuity},
Math. Ann. No 237, pp. 253-272 (1978).

%\bibitem{Mar} A. Marden, Outer Circles (An introduction to hyperbolic
%3-manifolds), Cambridge University Press, 2007.

\bibitem{Ma} B. Maskit, Kleinian Groups, A series in comprehensive
studies in mathematics. 287.

\bibitem{Myr} P. J. Myrberg, Untersuchungen über automorphen
Funktionen beliebig vieler Variabeln. Acta Math 46, 215-336, 1925

\bibitem{Na0} J.P. Navarrete, On the limit set of discrete subgroups of $PU(2,1)$.
Geometriae Dedicata, 122, 2006, pp. 1-13.

\bibitem{Na} J.P. Navarrete, The Trace Function and Complex Kleinian
Groups. International Journal of Mathematics. Volume 19, No. 7,
August 2008, pp. 865-890.

%\bibitem{Ra} J. Ratcliffe, Foundations of Hyperbolic Manifolds.
%Graduate Texts in Mathematics. 149.

%\bibitem{Scheuneman} J. Scheuneman, Fundamental groups of compact complete
%locally affine complex surfaces II. Pacific Journal of Mathematics,
%Vol. 52, No. 2, 1974, pp.553-566.

\bibitem{Sc} P.Scoot, The geometries of 3-manifolds, Bull. London Math.Soc.,15 (1983), 401-487.
\bibitem{SV} J. Seade, A. Verjovsky, Actions of Discrete Groups on
Complex Projective Spaces. Contemporary Math., 269, 2001, pp.
155-178.
\bibitem{Th}. W.Thurston, Three-Dimensional Geometry and Topology, edited by Silvio Levy. Princeton University Press, 1997.

%\bibitem{Su} T. Suwa, Compact Quotient Spaces of $\Bbb{C}^2$, J.
%Differential Geometry, 10, 1975, pp. 239-252.

\end{thebibliography}
\end{document}